\renewcommand*{\backref}[1]{}
\renewcommand*{\backrefalt}[4]
{%
    \ifcase #1 (Not cited.)%
        \or        (Cited on page~#2.)
        \else      (Cited on pages~#2.)
    \fi
}
\renewcommand{\p@subfigure}{\thefigure.} 
\theoremstyle{theorem}
\newtheorem{theorem}{Theorem}
\newtheorem{proposition}[theorem]{Proposition}
\newtheorem{question}[theorem]{Question}
\newtheorem{corollary}[theorem]{Corollary}
\newtheorem{conjecture}[theorem]{Conjecture}
\newtheorem*{rep@theorem}{\rep@title}
\newcommand{\newreptheorem}[2]{%
\newenvironment{rep#1}[1]{%
 \def\rep@title{#2 \ref{##1}}%
 \begin{rep@theorem}}%
 {\end{rep@theorem}}}
\theoremstyle{definition}
\newtheorem{definition}[theorem]{Definition}
\newtheorem{remark}[theorem]{Remark}
\newcommand{\Z}{\mathbb{Z}}
\newcommand{\Aa}{\mathcal A}
\newcommand{\Kk}{\mathcal K}
\newcommand{\Pp}{\mathcal P}
\newcommand{\Ss}{\mathcal S}
\newcommand{\Tt}{\mathcal T}
\newcommand{\Uu}{\mathcal U}
\newcommand{\Yy}{\mathcal Y}
\def\@seccntformat#1{%
  \protect\textup{\protect\@secnumfont
    \ifnum\pdfstrcmp{subsection}{#1}=0 \bfseries\fi
    \csname the#1\endcsname
    \protect\@secnumpunct
  }%
}  
\begin{document}

\rhead{\thepage}
\lhead{\author}
\thispagestyle{empty}


\raggedbottom
\pagenumbering{arabic}
\setcounter{section}{0}


\title{Indecomposable Klein bottles with order-4 meridians}

\author{Jeffrey Meier}
\address{Department of Mathematics, Western Washington University, Bellingham, WA 98225}
\email{jeffrey.meier@wwu.edu}
\urladdr{http://jeffreymeier.org} 

\begin{abstract}
	We exhibit an infinite family of indecomposable Klein bottles in the 4--sphere with order-4 meridians.
\end{abstract}

\maketitle

\section{Introduction}
\label{sec:intro}

A foundational theorem in knot theory states that every knot in $S^3$ can be expressed uniquely as a connected sum of nontrivial prime knots.
It is natural to wonder to what extent such a theorem might hold in dimension four, where one is concerned with embeddings of surfaces in $S^4$, or \emph{surface-knots}.
To this day, very little is known about the ways in which a surface-knot might decompose as the connected sum of simpler surfaces.
For example, each of the following questions remains unanswered:
\begin{enumerate}
	\item \emph{Can the unknotted 2--sphere be expressed as the connected sum of two knotted 2--spheres?}
	\item \emph{Is every knotted projective plane the connected sum of a knotted 2--sphere and an unknotted projective plane?}
\end{enumerate}
The first of the above questions is closely related to the famous Unknotting Conjecture for 2--knots~\cite[pages 55, 97]{CarKamSai_04_Surfaces-in-4-space}, while the second is the purview of the Kinoshita Conjecture, which posits an affirmative answer~\cite[Remark~3.7]{KatSae_98_Embeddings-of-quaternion-space}.

\begin{wrapfigure}[15]{r}{0.43\textwidth}
\vspace{-7mm}
\begin{mdframed}
	\centering
	\includegraphics[width=\textwidth]{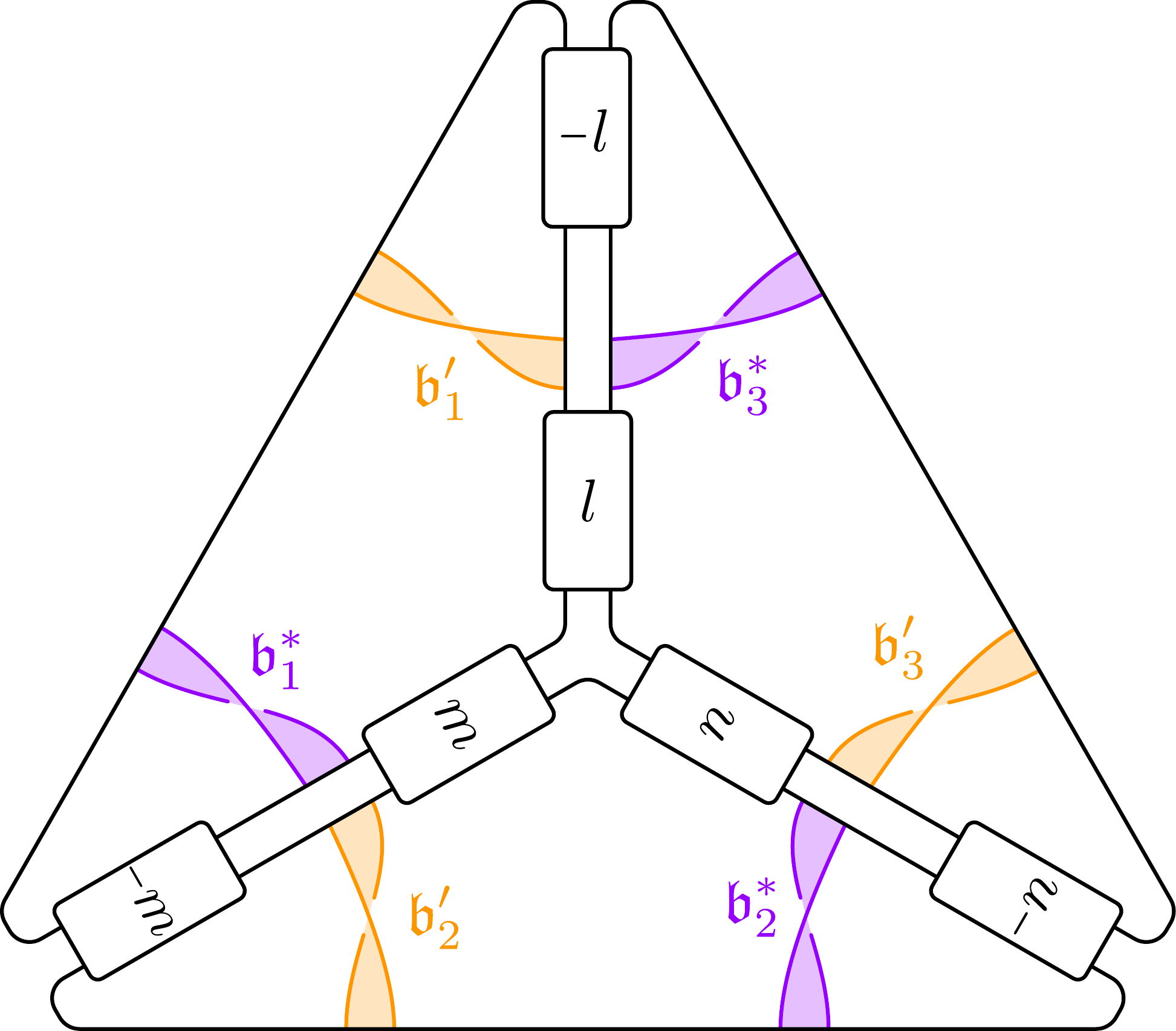}
	\caption{The Klein bottle $\Kk$}
	\label{fig:bottle}
\end{mdframed}
\end{wrapfigure}

It is known that surface-knots can decompose in surprising ways: Viro gave an example of a knotted 2--sphere $\Ss$ such that $\Ss\#\Pp = \Pp$, where $\Pp$ is the unknotted projective plane~\cite{Vir_73_Local-knotting}; see also~\cite[page 56]{CarKamSai_04_Surfaces-in-4-space}.
This illustrates the subtlety of finding an analogue in dimension four to the prime factorization theorem for knots in dimension three, especially where nonorientable surface-knots are concerned.

A surface-knot $\Ss\subset S^4$ is called \emph{decomposable} if we have $\Ss = \Ss_1\#\Ss_2$ such that neither $\Ss_1$ nor $\Ss_2$ is a 2--sphere; otherwise, $\Ss$ is called \emph{indecomposable}.

The purpose of this paper is to present an infinite family of indecomposable Klein bottles with order-4 meridians.
Let $l$ be an even integer with $|l|\geq 2$, and let $m$ and $n$ be odd integers with $|m|,|n|\geq 3$.
Let $\Kk(l,m,n)$ be the Klein bottle introduced in Definition~\ref{def:klein_bottles} below and shown in Figure~\ref{fig:bottle}.

\begin{theorem}
\label{thm:main}
	If one of $m$ or $n$ is divisible by 3 and the other is divisible by 3 or 5, then the Klein bottle $\Kk(l,m,n)$ has order-4 meridians.
	If $m=n$, then $\Kk(l,m,m)$ is indecomposable.
\end{theorem}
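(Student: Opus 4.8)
The plan is to obtain the indecomposability of $\Kk(l,m,m)$ by combining the order-$4$ meridian property (the first part of the Theorem; when $m=n$ this needs only $3\mid m$) with a general fact about projective planes in $S^4$.

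\emph{Reduction to a pair of projective planes.} Suppose $\Kk(l,m,m)=\Ss_1\#\Ss_2$ with neither $\Ss_i$ a $2$-sphere. Euler characteristics give $\chi(\Ss_1)+\chi(\Ss_2)=\chi(\Kk)+2=2$, and since a connected closed surface other than $S^2$ has $\chi\le 1$, both $\Ss_i$ have $\chi=1$, so $\Ss_1\cong\Ss_2\cong\RP^2$. By the standard description of the group of a connected sum of surface-knots, $\pi_1(S^4\setminus\Kk(l,m,m))$ is the pushout of $\pi_1(S^4\setminus\Ss_1)\leftarrow\Z\to\pi_1(S^4\setminus\Ss_2)$, the generator of $\Z$ mapping to the two meridians; in particular there are homomorphisms $\pi_1(S^4\setminus\Ss_i)\to\pi_1(S^4\setminus\Kk(l,m,m))$ sending the meridian $\mu_i$ of $\Ss_i$ to the meridian $\mu$. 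Because $\mu$ has order $4$, $\mu^2\ne 1$, hence $\mu_i^2\ne 1$ in $\pi_1(S^4\setminus\Ss_i)$ for $i=1,2$.

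\emph{The crux.} It remains to contradict this, via the lemma: \emph{the meridian of any $\RP^2$ in $S^4$ has order $2$.} I would prove (or cite) this from two inputs. First, the normal Euler number of an $\RP^2$ in $S^4$ is $\pm2$ (Massey; the Whitney congruence $e\equiv 2\chi\pmod 4$ forces this), so the unit normal circle bundle — the boundary of the exterior $E$ of the $\RP^2$ — has fundamental group $\langle t,f\mid tft^{-1}=f^{-1},\,t^2=f^2,\,f^4=1\rangle\cong Q_8$, with the meridian equal to the fibre class $f$; its image in $\pi_1(E)$ therefore has order dividing $4$. Second, the orientation-reversing simple closed curve $\gamma\subset\RP^2$ satisfies $\gamma^2=1$ in $\pi_1(\RP^2)$, so $\gamma^2$ bounds a disk in $\RP^2$; pushing that disk off the surface into $E$ exhibits a normal push-off of $\gamma$ whose square is conjugate to $\mu^{\pm2}$ and is null-homotopic, forcing $\mu^2=1$. (The push-off/framing bookkeeping for the nonorientable normal bundle is the delicate point; this second input is exactly what fails for the Klein bottle — whose orientation-reversing curve has infinite order — which is the slack that lets the Klein bottle's meridian have order $4$.) Given the lemma, $\mu_i^2=1$ for $i=1,2$, contradicting the previous paragraph; hence $\Kk(l,m,m)$ is indecomposable.

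\emph{On the quoted input.} Establishing that $\Kk(l,m,n)$ has order-$4$ meridians is where I expect most of the labor: read a finite presentation of $\pi_1(S^4\setminus\Kk(l,m,n))$ off the diagram of Figure~\ref{fig:bottle}, with $\mu$ a Wirtinger generator; check $\mu^4=1$ by direct manipulation of the relators (using $l$ even and $m,n$ odd); and check $\mu^2\ne 1$ via a representation to a finite group in which $\mu$ has order $4$ — the binary icosahedral group $\mathrm{SL}_2(\F_5)$ is the natural target, being perfect with elements of orders $3$, $4$, and $5$, so that the relators $x^m$ and $y^n$ are satisfiable by elements of order $3$ or $5$ exactly when one of $m,n$ is divisible by $3$ and the other by $3$ or $5$, which is why ``$3$ or $5$'' appears. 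The main obstacle overall is the $\RP^2$-meridian lemma — specifically the push-off computation — if it is not already in the literature; otherwise it is the presentation-level check that $\mu$ has order exactly $4$.
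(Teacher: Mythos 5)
Your reduction of a hypothetical decomposition to two projective--plane summands is fine (Euler characteristic forces $\chi(\Ss_1)=\chi(\Ss_2)=1$, hence both are $\RP^2$'s, and the meridian maps $\pi(\Ss_i)\to\pi(\Kk)$ show each $\Ss_i$ would need an order-4 meridian). But the crux lemma you then invoke --- \emph{every} $\RP^2$ in $S^4$ has order-2 meridians --- is not a theorem; it is an open problem, essentially a consequence of the Kinoshita Conjecture. The paper states this explicitly: projective planes with order-4 meridians are not known to exist, and exhibiting one would refute Kinoshita. This is precisely why order-4 meridians alone do not yield indecomposability of a Klein bottle, and why the paper's Proposition~\ref{prop:stably} is phrased as a dichotomy (stably irreducible \emph{or} stably isotopic to a projective plane with order-4 meridians) rather than as an unconditional statement. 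Your proposed proof of the lemma also does not work: pushing the complementary disk $D\subset\RP^2$ (with $\partial D\simeq\gamma^2$) off the surface produces a null-homotopy, inside the circle bundle over $D$, of the \emph{section-framed} lift of $\gamma^2$, which is the class $1\in Q_8$ --- not of the square of a lift of $\gamma$, which is $f^{2}=\mu^{2}$. The difference between these two lifts is a power of the fibre governed by the normal Euler number $\pm 2$; the bookkeeping closes up consistently inside the peripheral $Q_8$ and imposes no new relation on the image of $\mu$ in $\pi_1(E)$. A telltale sign is that your argument never uses the hypothesis $m=n$ and would prove that \emph{every} Klein bottle with order-4 meridians is indecomposable, which is not known.

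The paper's route to indecomposability is entirely different: it passes to the double branched cover $\Sigma_2(\Kk)$, computes $\pi_1(\Sigma_2(\Kk))$ to be a von Dyck (triangle) group (Corollary~\ref{cor:dbc_group}), and runs the Lidman--Piccirillo obstruction, which needs (i) signature zero, (ii) that $\pi_1(\Sigma_2(\Kk))$ is not a nontrivial free product, and (iii) an orientation-reversing diffeomorphism of $\Sigma_2(\Kk)$; the hypothesis $m=n$ enters through the amphichirality statement of Proposition~\ref{prop:amphi}, which supplies (i) and (iii). Your sketch for the first part of the theorem (order-4 meridians via a finite quotient in which the meridian visibly has order 4, with $\mathrm{SL}_2(\F_5)$-type targets explaining the ``3 or 5'' hypothesis) is in the same spirit as the paper's Corollary~\ref{cor:meridians}, which surjects $\pi(\Kk(l,m,n))$ onto the finite group $\pi(\Kk(2,3,\delta))$, an extension of $\Z_2$ by $\mathrm{SL}_2(\Z_\delta)$; that part of your plan is viable, though the verification that $\mu^4=1$ requires the explicit presentation worked out in Proposition~\ref{prop:group_K}.
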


The first part of Theorem~\ref{thm:main} is Corollary~\ref{cor:meridians}, while the second part is Proposition~\ref{prop:indecomp}.
Corollary~\ref{cor:distinct} shows that the $\Kk$ are pairwise non-isotopic whenever their parameters are distinct modulo signs.

Indecomposable Klein bottles have previously been given by Yoshikawa~\cite{Yos_98_The-order-of-a-meridian-of-a-knotted} and by Lidman and Piccirillo~\cite{LidPic_25_Stably-irreducible-non-orientable}.
In fact, we use the argument of Lidman and Piccirillo to prove Proposition~\ref{prop:indecomp}.
Interestingly, the double branched cover of our Klein bottle $\Kk(2,3,7)$ has the same fundamental group as the double branched cover of their Klein bottles.

The first example of a Klein bottle with order-4 meridians was given by Yoshikawa, who produced, for each even (nonnegative) integer, a Klein bottle having meridians of that order~\cite{Yos_98_The-order-of-a-meridian-of-a-knotted}.
Curiously, the Klein bottle $\Yy$ with order-4 meridians produced by Yoshikawa has the same group as our Klein bottle $\Kk(2,3,3)$.
Yoshikawa's Klein bottles with meridians whose order is not 2 or 4 are automatically indecomposable:
The order of a meridian of a projective plane (or any connected sum of projective planes) is either 2 or 4, since the peripheral subgroup is the quaternion group $Q_8$~\cite{Pri_77_Homeomorphisms-of-quaternion-space}.

Surface-knots $\Ss$ and $\Ss'$ are \emph{stably isotopic} if there exist unknotted surface-knots $\Uu$ and $\Uu'$ such that $\Ss\#\Uu$ and $\Ss'\#\Uu'$ are isotopic.
A surface-knot $\Ss$ is \emph{stably reducible} if it is stably isotopic to a surface-knot $\Ss'$ with $\chi(\Ss')>\chi(\Ss)$; otherwise, it is \emph{stably irreducible}.

\begin{proposition}
\label{prop:stably}
	Let $\Kk$ be any $\Kk(l,m,n)$.
	Then, either
	\begin{enumerate}
		\item $\Kk$ is stably irreducible, or
		\item $\Kk$ is stably isotopic to a projective plane with order-4 meridians.
	\end{enumerate}
\end{proposition}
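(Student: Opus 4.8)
The plan is to show that if $\Kk$ is not stably irreducible then it is stably isotopic to a projective plane with order-4 meridians. Assume $\Kk$ is stably reducible; by definition it is then stably isotopic to a surface-knot $\Ss'$ with $\chi(\Ss')>\chi(\Kk)=0$, and since a closed surface has Euler characteristic at most $2$ we have $\chi(\Ss')\in\{1,2\}$, so $\Ss'$ is a $2$--sphere or a projective plane. First I would record the following reduction in the sphere case: if $\Ss'$ is a $2$--sphere, then in any stable isotopy $\Kk\#\Uu\cong\Ss'\#\Uu'$ the unknotted surface $\Uu'$ must be nonorientable (the left-hand side is), so $\Uu'$ contains an unknotted projective-plane summand $\Pp_0$, and rewriting $\Ss'\#\Uu'=(\Ss'\#\Pp_0)\#(\text{unknotted})$ shows that $\Kk$ is also stably isotopic to the projective plane $\Ss'\#\Pp_0$. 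Thus in all cases $\Kk$ is stably isotopic to some projective plane $\Pp'$, whose meridians have order $2$ or $4$ by Price's theorem as recalled in the introduction. It remains to rule out the possibility that $\Pp'$ has order-$2$ meridians (equivalently, since the reduction above sends a $2$--sphere to an order-$2$ projective plane, to rule out that $\Kk$ is stably isotopic to a $2$--sphere).

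The device for this is a stable-isotopy invariant of the fundamental group. For a surface-knot $\Ss$, let $\mu$ denote a meridian and set $\pi_1^{(2)}(\Ss):=\pi_1(S^4\smallsetminus\Ss)/\langle\langle\mu^2\rangle\rangle$; this is independent of the choice of meridian. A van Kampen computation along the connecting $S^3$ shows that connect-summing $\Ss$ with an unknotted orientable surface leaves $\pi_1(S^4\smallsetminus\Ss)$ (and its meridian) unchanged, while connect-summing with an unknotted projective plane replaces $\pi_1(S^4\smallsetminus\Ss)$ by $\pi_1(S^4\smallsetminus\Ss)/\langle\langle\mu^2\rangle\rangle$ and makes the new meridian square trivial; in either case $\pi_1^{(2)}$ is unchanged, so $\pi_1^{(2)}$ is a stable-isotopy invariant. (One could equally use $\pi_1(\Sigma_2(\Ss))$, a stable-isotopy invariant because the double branched cover of an unknotted surface is a connected sum of copies of $S^2\times S^2$ and $\pm\CP^2$, all simply connected, and $\pi_1$ is additive under connected sum.) The group $\pi_1(S^4\smallsetminus\Kk(l,m,n))$, and hence $\pi_1^{(2)}(\Kk(l,m,n))$, is computed earlier in the paper from the diagram of Figure~\ref{fig:bottle}.

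To finish, I would compare second group homology. If $\Ss'$ is a projective plane with order-$2$ meridians, then $\mu^2=1$, so $\pi_1^{(2)}(\Ss')=\pi_1(S^4\smallsetminus\Ss')$, and by Hopf's theorem its $H_2$ is a quotient of $H_2(S^4\smallsetminus\Ss')\cong\tilde H^1(\RP^2)=0$; hence $H_2(\pi_1^{(2)}(\Ss'))=0$. If $\Ss'$ is a $2$--sphere, then $H_2(\pi_1(S^4\smallsetminus\Ss'))$ is likewise $0$ (now $H_2(S^4\smallsetminus\Ss')\cong\tilde H^1(S^2)=0$), and running the five-term exact sequence for $1\to\langle\langle\mu^2\rangle\rangle\to\pi_1(S^4\smallsetminus\Ss')\to\pi_1^{(2)}(\Ss')\to 1$ — using that $\langle\langle\mu^2\rangle\rangle/[\,\cdot\,]$ is cyclic, generated by $\mu^2$, and maps isomorphically onto the index-$2$ subgroup $\ker(H_1(S^4\smallsetminus\Ss')\to\Z/2)$ of $H_1(S^4\smallsetminus\Ss')=\Z$ — again forces $H_2(\pi_1^{(2)}(\Ss'))=0$. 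By contrast, for $\Kk=\Kk(l,m,n)$ one has $H_2(S^4\smallsetminus\Kk)\cong\tilde H^1(\Kk)\cong\Z$, and the crux of the argument is to show that this infinite cyclic group survives both to $H_2$ of the knot group and to $H_2(\pi_1^{(2)}(\Kk))$, so that $H_2(\pi_1^{(2)}(\Kk(l,m,n)))\ne 0$ for every admissible triple $(l,m,n)$. Granting this, $\pi_1^{(2)}$ being a stable-isotopy invariant shows $\Kk(l,m,n)$ is not stably isotopic to a $2$--sphere or to an order-$2$ projective plane, so the projective plane $\Pp'$ produced above must have order-$4$ meridians, completing the proof. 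I expect this last non-vanishing statement to be the main obstacle: it must be established uniformly in $(l,m,n)$, without the divisibility hypotheses of Theorem~\ref{thm:main}, which calls for a structural handle on the groups $\pi_1(S^4\smallsetminus\Kk(l,m,n))$ — for instance an explicit presentation $2$--complex realizing the needed $H_2$ class, or aspherical-type control of the complement — that persists for all parameters.
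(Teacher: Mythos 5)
Your overall strategy is the same as the paper's: reduce to comparing the second homology of the group obtained from the knot group by killing the square of a meridian (your $\pi_1^{(2)}$ is exactly $\pi(\Kk\#\Uu)$ for $\Uu$ an unknotted nonorientable surface, i.e.\ the group of Corollary~\ref{cor:coxeter}), show this $H_2$ vanishes for a sphere or an order-2 projective plane, and show it does not vanish for $\Kk(l,m,n)$. Your reduction of the sphere case to the order-2 projective-plane case, and your vanishing arguments on that side via Hopf's theorem, are fine (the paper instead gets the vanishing from a Mayer--Vietoris argument applied to the amalgamated free product $\pi(\Pp)\ast_{\Z_2}\pi(\Uu')$, which amounts to the same thing). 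But the step you yourself flag as the main obstacle --- that $H_2(\pi_1^{(2)}(\Kk(l,m,n)))\neq 0$ uniformly in $(l,m,n)$ --- is a genuine gap, and it is precisely the crux of the proof. None of the devices you gesture at (an explicit presentation 2--complex, asphericity of the complement) is how this is done, and asphericity in particular is not available here.

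The paper fills this gap with a purely group-theoretic input: by Corollary~\ref{cor:coxeter}, $\pi_1^{(2)}(\Kk(l,m,n))$ is the Coxeter group
$$\langle a,b,c \mid a^2=b^2=c^2=(ab)^l=(bc)^m=(ca)^n=1\rangle,$$
i.e.\ the Coxeter group of the triangle graph with edge labels $l,m,n$. Howlett computed the Schur multipliers of all finitely generated Coxeter groups~\cite[Theorem~A]{How_88_On-the-Schur-multipliers-of-Coxeter}, and for this graph --- with $l$ even and $m,n$ odd --- his formula gives $H_2\cong\Z_2$ for every admissible triple, with no divisibility hypotheses needed. Note also that your hoped-for mechanism is not quite right: you suggest the class of $H_2(S^4\setminus\nu(\Kk))\cong\Z$ should ``survive'' to give an infinite cyclic subgroup of $H_2(\pi_1^{(2)}(\Kk))$, but the actual answer is only $\Z_2$, so whatever survives is torsion; an argument built on pushing forward the integral class would have to contend with that. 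If you replace your final paragraph with the identification of $\pi_1^{(2)}(\Kk)$ as this Coxeter group and the appeal to Howlett's theorem, your proof closes up and is essentially the paper's.
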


Note that projective planes with order-4 meridians are not known to exist~\cite[Section~4]{Yos_98_The-order-of-a-meridian-of-a-knotted}; the existence of one would give a negative answer to the Kinoshita Conjecture, which was discussed at the outset of this article.

The first examples of stably irreducible surface-knots were orientable and were given by Livingston~\cite{Liv_85_Stably-irreducible}; more recently, Lidman and Piccirillo gave examples of stably irreducible non-orientable surface-knots~\cite{LidPic_25_Stably-irreducible-non-orientable}.
The Klein bottles of Lidman and Piccirillo are also indecomposable, but it is not known whether they have order-4 meridians.
The Klein bottles of Lidman and Piccirillo are ribbon, and it is not clear whether the Klein bottles produced in this note are ribbon.

In Section~\ref{sec:future}, we give a number of open questions motivated by this work.
In particular, Conjecture~\ref{conj:order-4} posits that all of the $\Kk(l,m,n)$ are indecomposable, have order-4 meridians, and are stably irreducible.

\subsection*{Acknowledgements}
The author is grateful to James Dix for his participation in an REU project in 2016 hosted by Indiana University during which time these Klein bottles were first explored.
Sincere thanks are due to Cameron Gordon and Tye Lidman for insightful comments on an earlier draft of this article.
This work was supported by NSF grant DMS-2405324.

\section{The Klein bottles}
\label{sec:bottles}

Let $l$ be an even integer with $|l|\geq 2$, and let $m$ and $n$ be odd integers with $|m|\geq 3$ and $|n|\geq 3$.
Let $K = P(l,m,n)$ denote the corresponding 3--stranded pretzel knot; see Figure~\ref{fig:knot}, where each twist box encodes the indicated number of positive or negative half-twists.
Importantly, $K$ is strongly invertible via the (green) axis of rotation $C$ shown in Figure~\ref{fig:knot}.

\begin{figure}[h!]
	\begin{subfigure}{.22\textwidth}
		\centering
		\includegraphics[width=.8\linewidth]{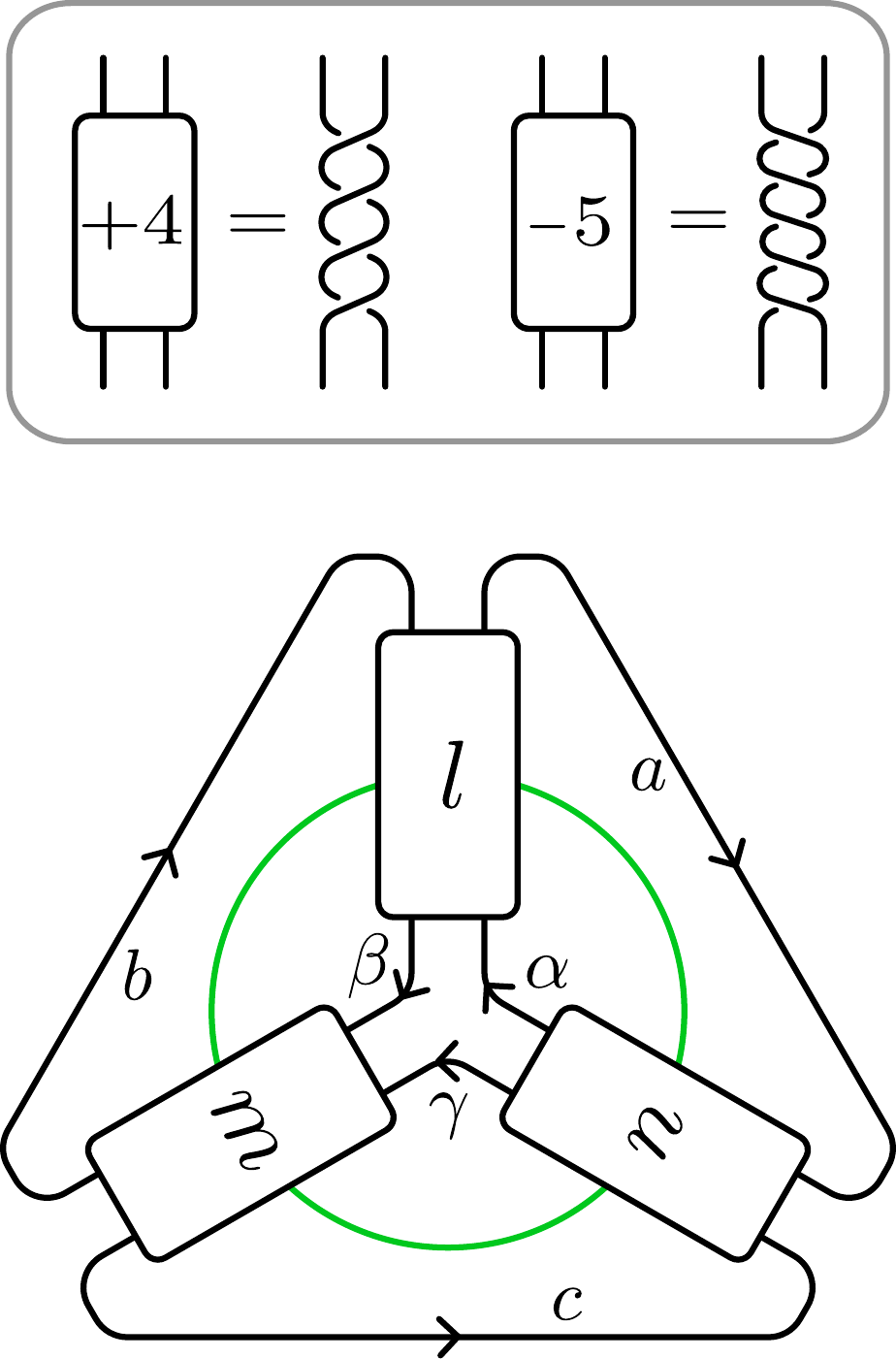}
		\caption{The pretzel knot $K$}
		\label{fig:knot}
	\end{subfigure}%
	\begin{subfigure}{.39\textwidth}
		\centering
		\includegraphics[width=.8\linewidth]{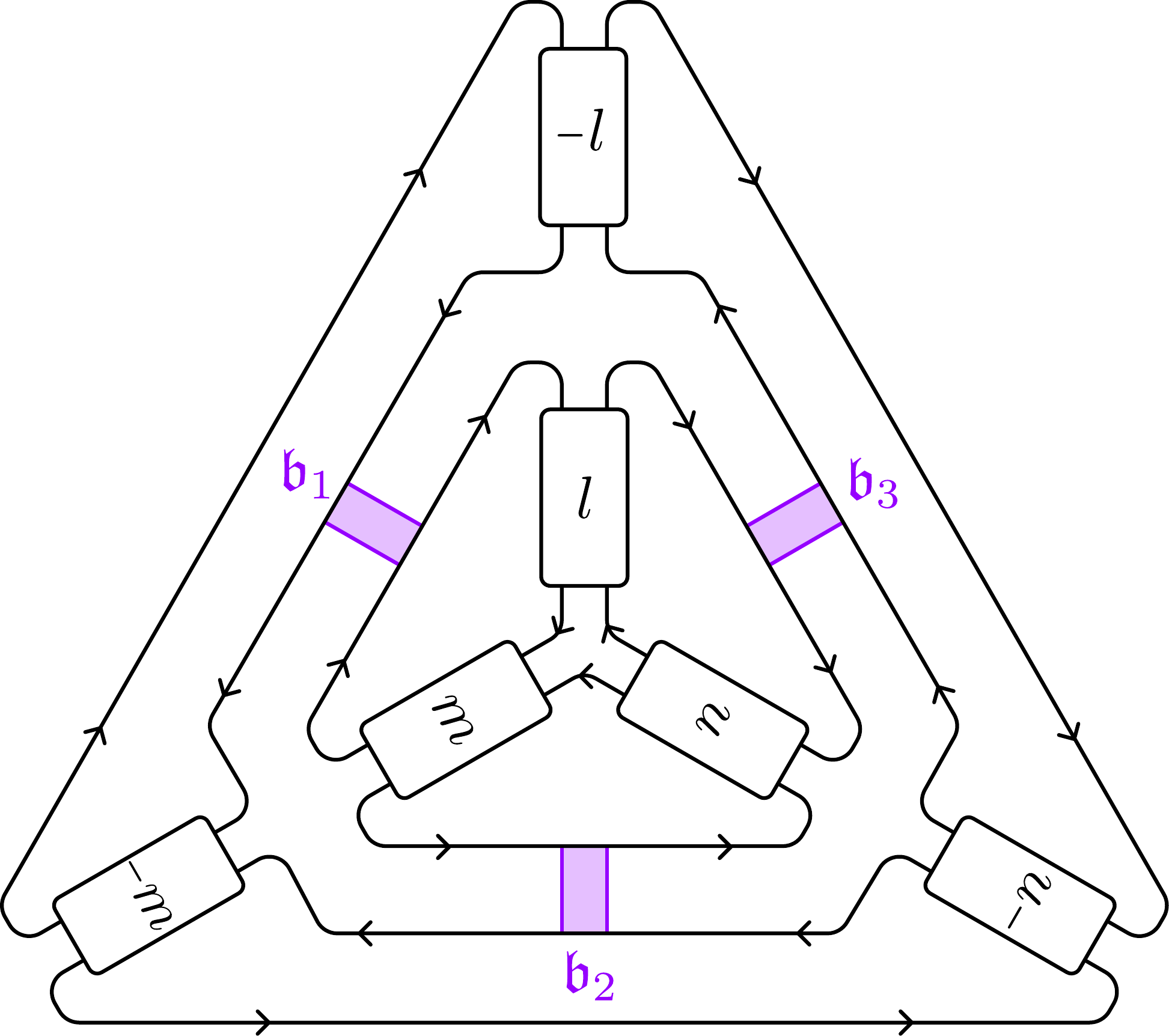}
		\caption{The ribbon annulus $\Aa$}
		\label{fig:annulus_1}
	\end{subfigure}%
	\begin{subfigure}{.39\textwidth}
		\centering
		\includegraphics[width=.8\linewidth]{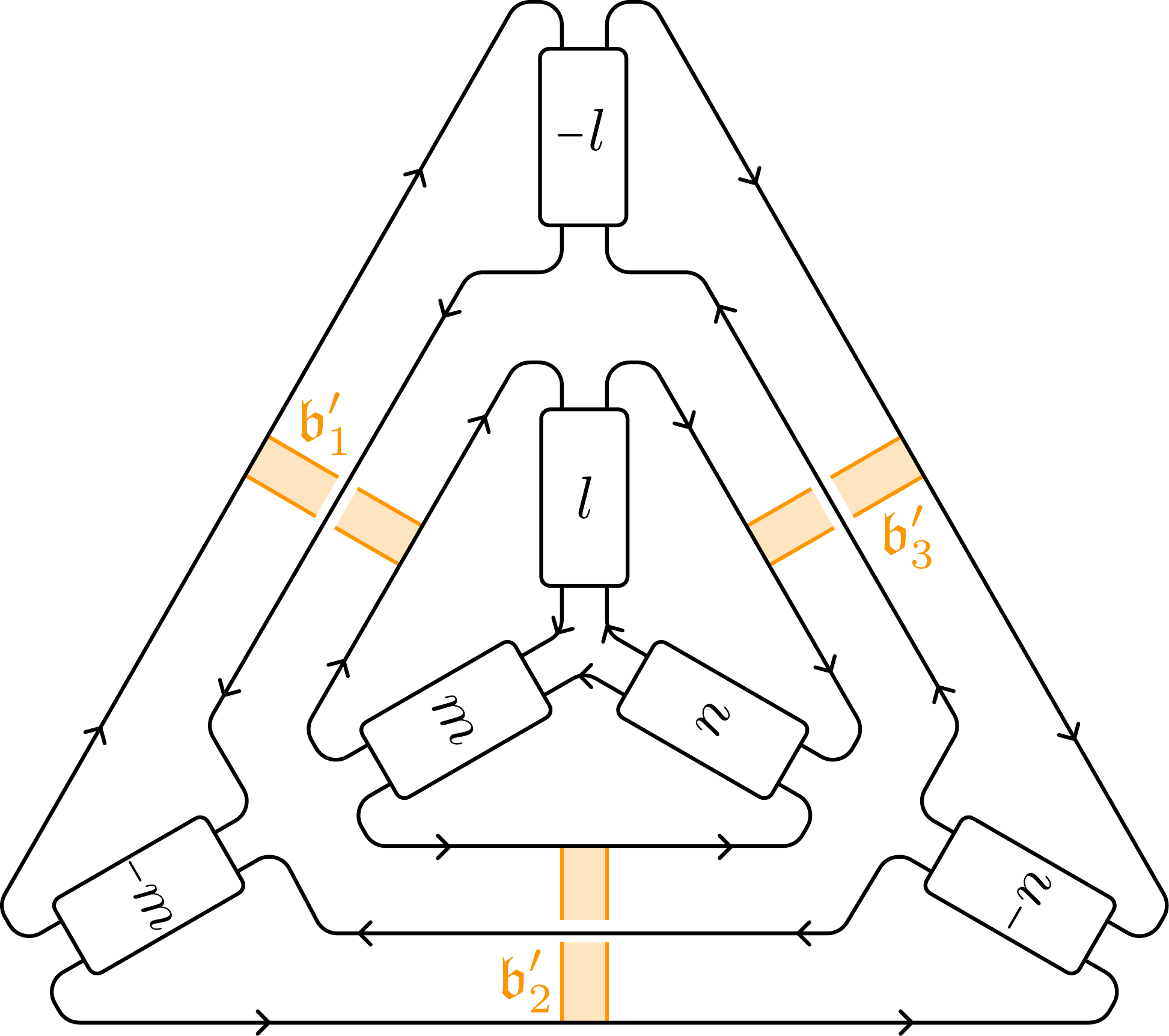}
		\caption{The ribbon annulus $\Aa'$}
		\label{fig:annulus_2}
	\end{subfigure}%
	\caption{Constructing the Klein bottles}
	\label{fig:construction}
\end{figure}

Regarding $K$ as a subset of $B^3$, we can form the product annulus $(B^4,\Aa) = (B^3,K)\times [0,1]$.
Note that
$$\partial(B^4,\Aa) = (B^3,K)\cup_{\partial B^3}\overline{(B^3,K)} = (S^3,K\sqcup \overline K),$$
where $\sqcup$ denotes split union and $\overline K$ is the mirror reverse of $K$.
A ribbon presentation for $\Aa$ with bands $\frak b_1$, $\frak b_2$, and $\frak b_3$ is shown in Figure~\ref{fig:annulus_1}, where $\partial B^3$ manifests as the 2--sphere splitting the inner knot $K$ from the outer knot $\overline K$.

Let $\tau\colon B^3\to B^3$ denote the involution of $K$ given by rotation through $\pi$ radians about the green circle $C$ shown in Figure~\ref{fig:knot}.
For $t\in[0,1]$, let $\tau_t\colon B^3\to B^3$ denote rotation through $\pi t$ radians about $C$, so $\tau_t$ is an isotopy from the identity to $\tau$.
Let $\Tt\colon B^3\times[0,1]\to B^3\times [0,1]$ given by $\Tt(x,t) = (\tau(x),t)$.

Now consider the annulus $(B^4, \Aa') = \Tt(B^4,\Aa)$.
A ribbon presentation for $\Aa'$ with bands $\frak b_1'$, $\frak b_2'$, and $\frak b_3'$ is shown in Figure~\ref{fig:annulus_2}.
Note that $\Aa'$ is isotopic to $\Aa$ via an ambient isotopy that is supported to the outside of $\partial B^3$ and that inverts the outer knot $\overline K$.

\begin{definition}
\label{def:klein_bottles}
	Let $\Kk = \Kk(l,m,n)$ denote the Klein bottle in $S^4$ defined by
	$$(S^4, \Kk) = (B^4,\Aa)\cup_{(S^3,K\sqcup\overline K)}\overline{(B^4,\Aa')}.$$
Throughout the paper, we will reserve the notation $\Kk$ for a member of this family of Klein bottles, suppressing the parameters when reasonable.
\end{definition}

A handle-decomposition of $\Kk$ is shown in Figure~\ref{fig:bottle}: The figure shows an unlink of three components with six bands attached.
The result of resolving these six bands is another unlink of three components.
Thus, the handle-decomposition has three minima, six saddles, and three maxima.

This handle-decomposition is derived from the definition as follows:
First, combine Figures~\ref{fig:annulus_1} and~\ref{fig:annulus_2} so all six bands are present, as in Figure~\ref{fig:diffeo1}; the bands can be freely isotoped along the split link to which they are attached until they are disjoint.
Next, resolve the $\frak b_i$, recording the dual bands $\frak b_i^*$.
Finally, slide each $\frak b_i^*$ over its corresponding $\frak b_i'$.

\begin{remark}
	The construction outlined above is a special instance of a more general mapping torus construction, which we describe here for completeness; it is inspired by an analogous construction of Price and Roseman for constructing projective planes~\cite{PriRos_75_Embeddings-of-the-projective-plane}.
	
	Let $K$ be an oriented knot and let $\Phi_t\colon S^3\times I\to S^3$ be an ambient isotopy taking $K$ to its reverse~$-K$. 
	This data gives rise to an embedded Klein bottle $\widehat{\Kk}(K,\Phi)$ in $S^3\times S^1$, given by
	$$\widehat{\Kk}(K,\Phi)\cap (S^3\times\{t\}) = \Phi_t(K).$$
	Let $\omega$ be a simple closed curve in $S^3\times S^1$, disjoint from $\widehat{\Kk}(K,\Phi)$ with the property that $[\omega]$ is a generator of $H_1(S^3\times S^1)\cong\Z$.
	Since $\omega$ is one dimensional, there is an ambient isotopy taking $\omega$ to a core curve $\{x\}\times S^1$.
	By performing surgery on $\omega$, we can transform $S^3\times S^1$ into $S^4$.
	Since $\omega$ is disjoint from $\widehat{\Kk}(K,\Phi)$, the image of $\widehat{\Kk}(K,\Phi)$ is an embedded Klein bottle in $S^4$, which we denote $\Kk(K,\Phi,\omega)$ and call the \emph{flip-spin} of $K$ with respect to $\Phi$ and $\omega$.
\end{remark}

In order to obstruct decomposability using the method of Lidman and Piccirillo, we will need to establish that a subclass of our Klein bottles (those with $n=m$) are amphicheiral in the sense that they are isotopic to their mirrors.
This is implied by the following.

\begin{proposition}
\label{prop:amphi}
	The Klein bottle $\Kk(l,m,n)$ is isotopic to the mirror of $\Kk(l,n,m)$.
\end{proposition}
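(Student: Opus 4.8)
The plan is to use the mapping-torus (flip-spin) description of these Klein bottles recorded in the remark following Definition~\ref{def:klein_bottles}. Write $K=P(l,m,n)$, let $\tau_t$ denote rotation by $\pi t$ about the axis $C$ (extended over $S^3$), and recall that $\Kk(l,m,n)$ is obtained from the Klein bottle $\widehat{\Kk}\subset S^3\times S^1$ with $\widehat{\Kk}\cap(S^3\times\{t\})=\tau_t(K)$ by surgering along a suitable generator $\omega$ of $H_1(S^3\times S^1)$ that is disjoint from $\widehat{\Kk}$. I will write $K''=P(l,n,m)$, with axis $C''$ and rotations $\tau''_t$, for the data producing $\Kk(l,n,m)$.

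The first point is that mirroring reverses the direction of the spinning. The self-diffeomorphism $\mathrm{id}_{S^3}\times(t\mapsto -t)$ of $S^3\times S^1$ is orientation-reversing; it carries $\widehat{\Kk}$ to the Klein bottle whose slice over $t$ is $\tau_{-t}(K)$, and it carries $\omega$ to an admissible surgery curve (for instance, one may take $\omega$ to be an $S^1$-fiber, which is preserved setwise). Since $\tau_1=\tau_{-1}$, the new slice family again closes up, and surgery still produces $S^4$. Hence $\overline{\Kk(l,m,n)}$ is the Klein bottle built from the same strongly invertible knot $(K,C)$ using the opposite rotation isotopy $t\mapsto\tau_{-t}$ --- equivalently, using the axis $C$ with its orientation reversed.

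It therefore suffices to produce an orientation-preserving diffeomorphism $g$ of $S^3$ with $g(K)=K''$ and $g(C)=C''$ that reverses the sense of rotation, i.e.\ $g\,\tau_t\,g^{-1}=\tau''_{-t}$: such a $g$ carries the flip-spin of $(K,C)$ built with the reversed rotation --- which is $\overline{\Kk(l,m,n)}$ by the previous paragraph --- to the flip-spin of $(K'',C'')$ built with its standard rotation, which is $\Kk(l,n,m)$, and the composite ambient diffeomorphism of $S^4$ is the desired isotopy. For $g$ I would use the symmetry of the $3$-pretzel that interchanges the $m$- and $n$-twist regions: these two twist regions can be slid past the remaining ($l$-)tangle, and doing so is realized by an ambient rotation of $S^3$ that takes $P(l,m,n)$ to $P(l,n,m)$ and carries $C$ to $C''$ while reversing the orientation of the axis.

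The main obstacle is exactly this last assertion --- that the $m\leftrightarrow n$ interchange reverses, rather than preserves, the sense of rotation about the axis. This orientation bookkeeping is what forces the \emph{mirror} into the statement, since preserving the sense would instead yield the (generally false) conclusion $\Kk(l,m,n)\cong\Kk(l,n,m)$. I expect it to be most transparent to argue directly from the banded-unlink presentation of Figure~\ref{fig:bottle}, exhibiting the isotopy as an explicit sequence of moves that carries the three-component unlink with six bands describing $\Kk(l,m,n)$ to the mirror of the analogous picture for $\Kk(l,n,m)$, performing the twist-region interchange on the diagram and tracking the six bands together with the inner and outer pretzels $K$ and $\overline K$. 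The remaining points --- that the surgery curve stays standard after the $S^1$-reversal, and the precise placement of $C''=g(C)$ --- should be routine.
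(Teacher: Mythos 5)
Your overall strategy is sound and, once translated out of the flip-spin model, it is essentially the paper's argument: the paper also composes (i) a move that reverses the sense of the flip rotation (there, applying $\Tt^{-1}$ to each half of $(B^4,\Aa)\cup\overline{\Tt(B^4,\Aa)}$), (ii) a fiberwise $\pi$--rotation of each time-slice that swaps the $(\pm m)$-- and $(\pm n)$--twist boxes, and (iii) the orientation-reversing reflection across the equatorial slice $S^3\times\{0\}$, which is exactly your $t\mapsto -t$. Your computation that conjugating $\tau_t$ by the $m\leftrightarrow n$ rotation yields $\tau''_{-t}$ is the same bookkeeping that makes the paper's steps (i) and (ii) compatible.

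That said, there is a genuine gap: the one assertion that carries the entire proof --- that the ambient rotation interchanging the $m$-- and $n$--twist regions carries $(K,C)$ to $(K'',C'')$ while reversing the sense of rotation about the axis --- is stated, flagged by you as ``the main obstacle,'' and then deferred to a diagrammatic argument you do not carry out. Since this is precisely the point that forces the mirror into the conclusion (as you note, getting it wrong would ``prove'' the false statement $\Kk(l,m,n)\cong\Kk(l,n,m)$), the proposal as written is a correct plan rather than a proof. A second, smaller gap is that you rely on the identification of $\Kk(l,m,n)$ with a flip-spin $\Kk(K,\Phi,\omega)$, which the paper only sketches in a remark; to use it you would need to pin down $\omega$ (your suggestion of an $S^1$--fiber over a point of $C\setminus K$ works, since such points are fixed by every $\tau_t$) and check that the orientation-reversing map $\id\times(t\mapsto-t)$ respects the surgery framing so that it descends to a mirror diffeomorphism of $S^4$. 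Both gaps are closable, and the paper's own proof shows how to avoid the second entirely: work directly with the decomposition $(B^4,\Aa)\cup\overline{\Tt(B^4,\Aa)}$, apply $\Tt^{-1}$ to both halves to reverse the role of the rotation, then perform the fiberwise $\pi$--rotation and the equatorial reflection, reading the result off Figures~\ref{fig:diffeo1}--\ref{fig:diffeo3}.
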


\begin{proof}
	Let $\Kk = \Kk(l,m,n)$, and recall that
	$$(S^4,\Kk) = (B^4, \Aa)\cup \overline{\Tt(B^4, \Aa)}.$$
	The handle-decomposition corresponding to this splitting is shown in Figure~\ref{fig:diffeo1}, with the purple bands $\frak b_i$ describing $\Aa$ and the orange bands $\frak b_i'$ describing $\Tt(\Aa)$.
	If we apply $\Tt^{-1}$ to each $B^4$, this gives an ambient isotopy of $S^4$ taking $(S^4,\Kk)$ to $\Tt^{-1}(B^4,\Aa)\cup\overline{(B^4,\Aa)}$.
	The result of this ambient isotopy is shown in Figure~\ref{fig:diffeo2}.
	
	Next, we apply the ambient isotopy given by rotating each time-slice $S^3\times\{t\}$  of $S^4$ through $\pi$ radians about the vertical axis of the page; this is the rotation that leaves invariant the $(\pm l)$--twist boxes and swaps the $(\pm m)$--twist boxes with the $(\pm n)$--twist boxes.
	The result of this ambient isotopy is shown in Figure~\ref{fig:diffeo3}.
	
	Finally, we apply the orientation-reversing diffeomorphism given by reflection across the equatorial time-slice $S^3\times\{0\}$.
	This has the effect of swapping the colors of the bands in Figure~\ref{fig:diffeo3}.
	The resulting Klein bottle is the mirror of $\Kk(l,n,m)$, since we have applied an orientation-reversing diffeomorphism.
\end{proof}

\begin{figure}[h!]
	\begin{subfigure}{.33\textwidth}
		\centering
		\includegraphics[width=.9\linewidth]{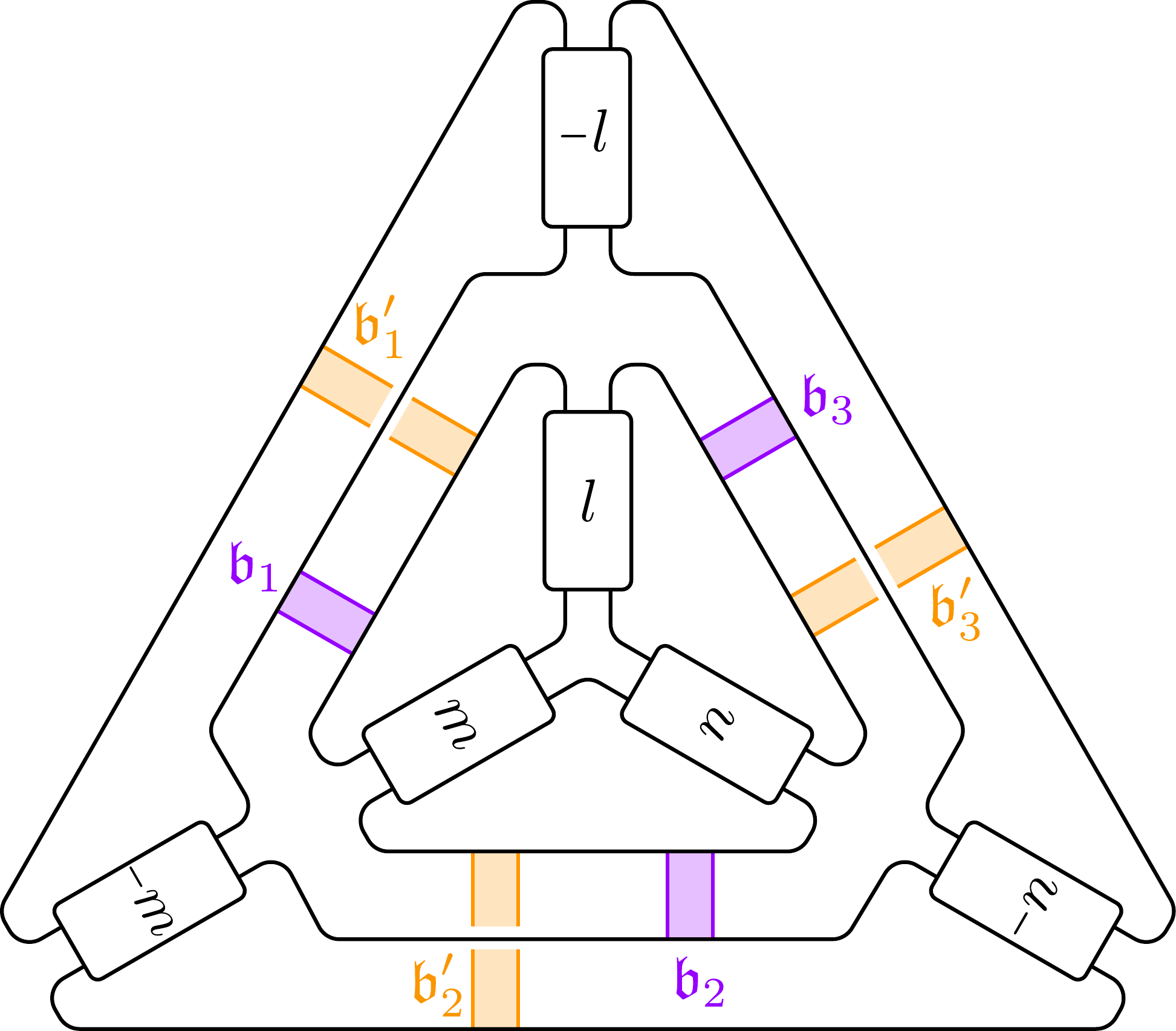}
		\caption{}
		\label{fig:diffeo1}
	\end{subfigure}%
	\begin{subfigure}{.33\textwidth}
		\centering
		\includegraphics[width=.9\linewidth]{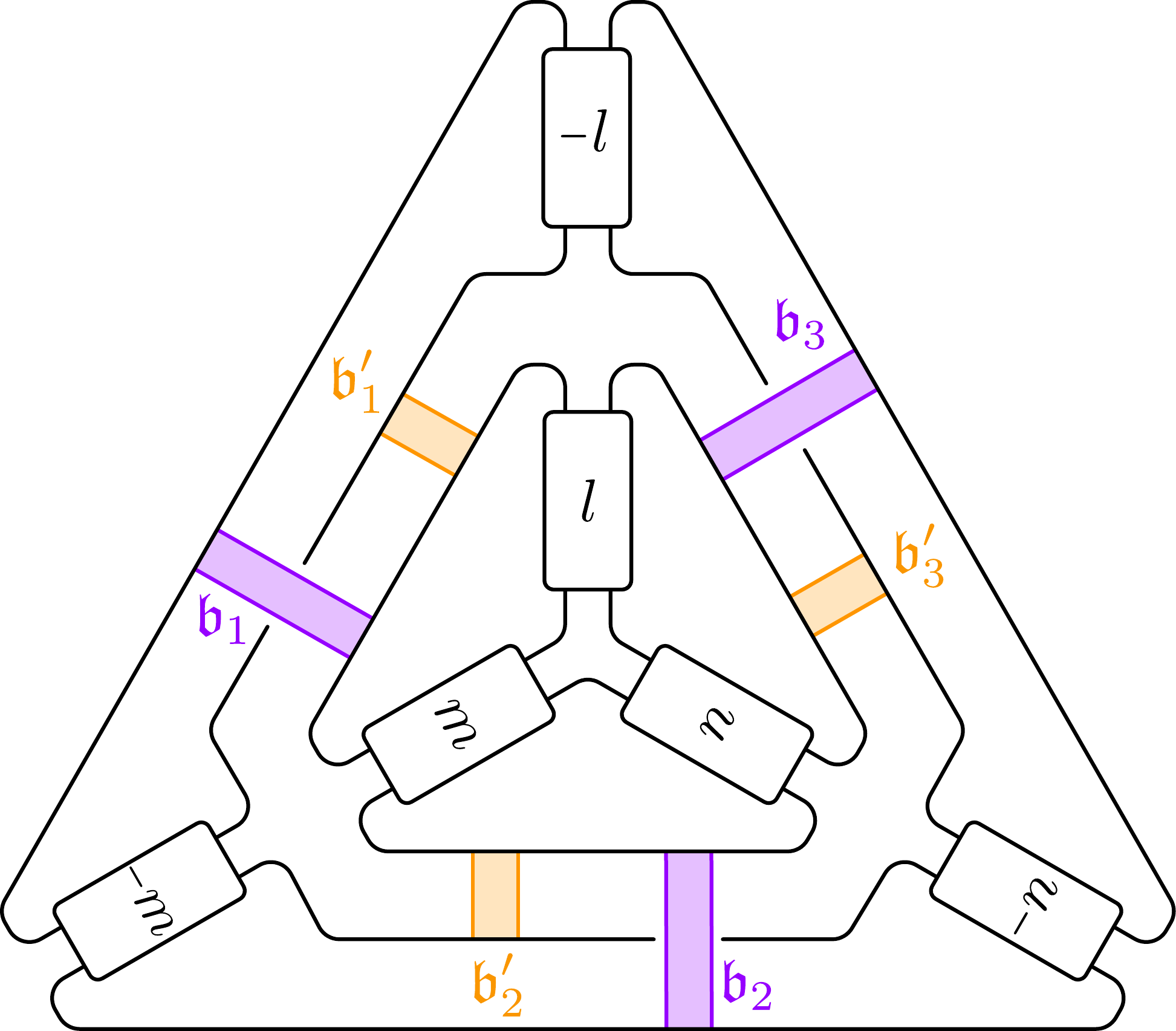}
		\caption{}
		\label{fig:diffeo2}
	\end{subfigure}%
	\begin{subfigure}{.33\textwidth}
		\centering
		\includegraphics[width=.9\linewidth]{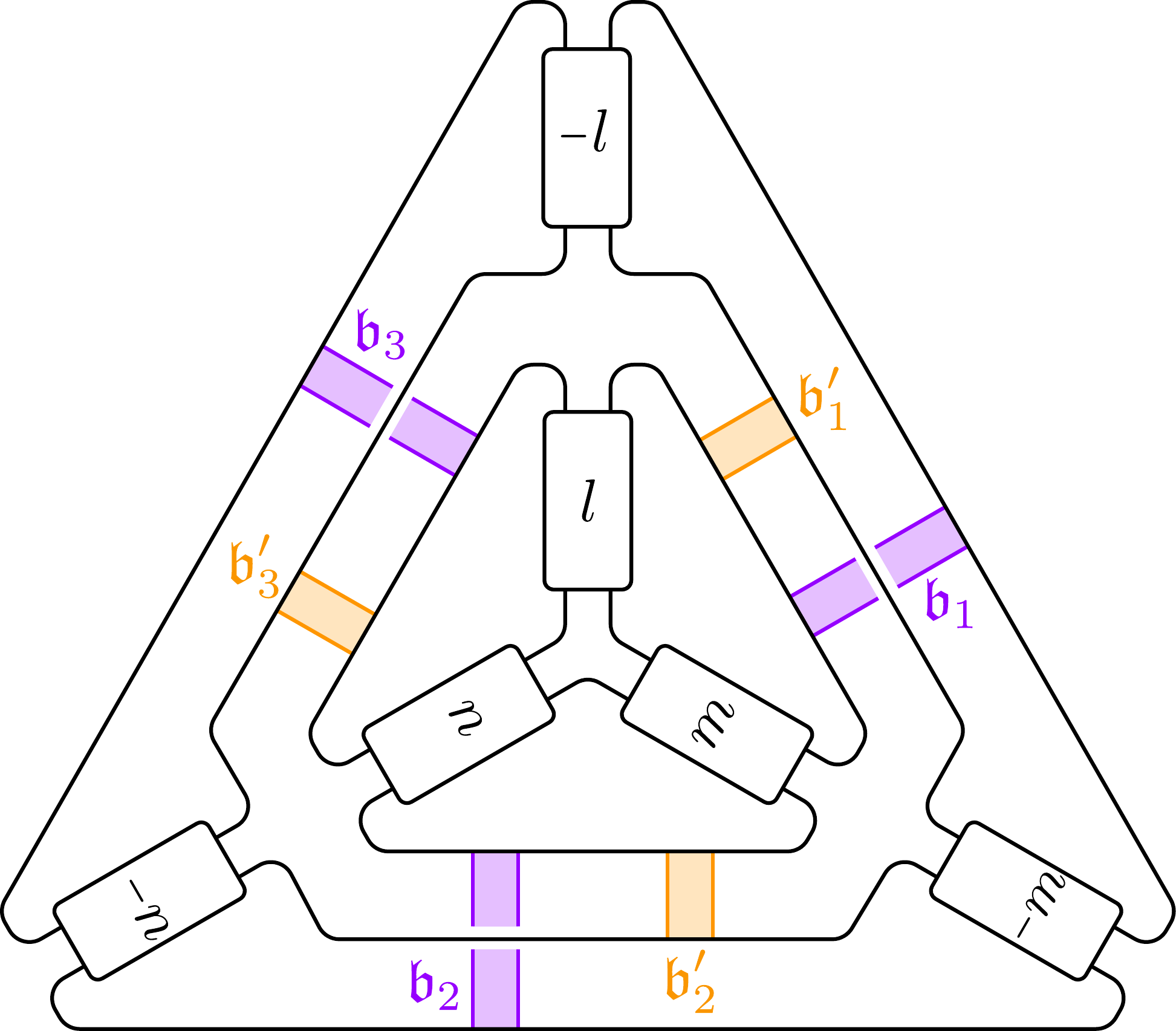}
		\caption{}
		\label{fig:diffeo3}
	\end{subfigure}%
	\caption{Three isotopic presentations of the Klein bottle $\Kk$.}
	\label{fig:diffeos}
\end{figure}

We note that Figure~\ref{fig:diffeo1} can be reflected across the vertical axis of the page to give $\Kk(-l,-m,-n)$.
It follows that $\Kk(-l,-m,-n)$ is the mirror of $\Kk(l,m,n)$.

\section{The groups}
\label{sec:groups}

We now commence our analysis of the group $\pi(\Kk)$.
Let $a$, $b$, and $c$ denote the meridional generators of $\pi(K)$ indicated in Figure~\ref{fig:knot}.
Using the natural inclusion map, we regard $a$, $b$, and $c$ as elements of $\pi(\Kk)$.

\begin{proposition}
\label{prop:group_K}
	The group of the Klein bottled $\Kk(l,m,n)$ in $S^4$ is presented as
	$$\pi(\Kk(l,m,n)) = \langle a, b, c\mid a^4 = b^4 = c^4 = (bc)^m = (ca)^n = 1, (ab)^l = a^2 = b^2 = c^2\rangle.$$
\end{proposition}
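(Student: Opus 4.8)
The plan is to compute $\pi(\Kk)$ by van Kampen's theorem applied to the splitting $(S^4,\Kk)=(B^4,\Aa)\cup_{(S^3,K\sqcup\overline K)}\overline{(B^4,\Aa')}$ of Definition~\ref{def:klein_bottles}, and then to reduce the resulting presentation by Tietze moves; equivalently, one can read a presentation directly off the handle-decomposition of Figure~\ref{fig:bottle}, in which $\Kk$ appears as a three-component unlink with six bands, giving three meridional generators and six band relations. The first step is to identify the three groups in the van Kampen diagram. Since $\Aa=(B^3\setminus K)\times[0,1]$ deformation retracts onto $B^3\setminus K$, and $\Aa'=\Tt(\Aa)$ for a diffeomorphism $\Tt$ of $B^4$, we have $\pi_1(B^4\setminus\Aa)\cong\pi_1(B^4\setminus\Aa')\cong\pi(K)$, the group of the pretzel knot $K=P(l,m,n)$. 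And because $K\sqcup\overline K$ is a split link, $\pi_1(S^3\setminus(K\sqcup\overline K))\cong\pi(K)*\pi(\overline K)$.

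Next I would analyze the two inclusion-induced maps. The map into $\pi_1(B^4\setminus\Aa)$ is ``untwisted'': because $\Aa$ is the product annulus, both free factors map to $\pi(K)$ carrying meridians to meridians. The map into $\pi_1(B^4\setminus\Aa')$ is the same map twisted by the strong inversion, i.e.\ composed with the automorphism $\tau_*$ of $\pi(K)$ induced by the rotation $\tau$ about the axis $C$. Consequently the pushout collapses: the amalgamation over the first factor identifies the two copies of $\pi(K)$, and the amalgamation over the second factor then forces $\tau_*(g)=g$ for $g$ in a generating set, so
$$\pi(\Kk)\;\cong\;\pi(K)\big/\big\langle\!\big\langle\,\tau_*(a)\,a^{-1},\ \tau_*(b)\,b^{-1},\ \tau_*(c)\,c^{-1}\,\big\rangle\!\big\rangle,$$
where $a,b,c$ are the meridional generators of Figure~\ref{fig:knot}; since $\tau_*$ is an automorphism, this is exactly $\pi(K)$ with all relations $\tau_*(g)=g$ imposed. (This agrees with the mapping-torus description in the Remark: $\pi_1((S^3\times S^1)\setminus\widehat\Kk)=\pi(K)\rtimes_{\tau_*}\Z$, and surgering $\omega$ kills the $\Z$-generator $t$, whence the conjugation relations $tgt^{-1}=\tau_*(g)$ become $g=\tau_*(g)$.)

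It then remains to make two explicit computations and simplify. A Wirtinger computation on the three twist regions of Figure~\ref{fig:knot} gives a presentation of $\pi(P(l,m,n))$ in terms of $a,b,c$: with $l$ even the $l$-region yields a relation that conjugates the pair of meridians running through it without interchanging them, while with $m,n$ odd the $m$- and $n$-regions each yield a relation that conjugates and interchanges a pair; the conjugating words are powers of products of two of $a,b,c$. Separately, reading off how rotation about $C$ permutes the three meridional arcs pins down $\tau_*$, which sends each of $a,b,c$ to a conjugate (possibly inverse) of one of $a,b,c$. Substituting this into the relations $\tau_*(g)=g$ and combining with the three twist-region relations, a sequence of Tietze transformations should collapse the $m$-region data to $(bc)^m=1$, the $n$-region data to $(ca)^n=1$, the $l$-region data to $(ab)^l=a^2=b^2$, and the residual identifications to $c^2=a^2$ and $a^4=1$ (hence also $b^4=c^4=1$) --- that is, to the asserted presentation.

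I expect the main obstacle to be precisely this bookkeeping: keeping track of the conjugating words in the twist-region Wirtinger relations, determining $\tau_*$ exactly on $a,b,c$ (including the inversions and conjugations produced by the orientation reversal in the doubling and by the half-rotation $\tau$), and arranging the Tietze simplification so that all three pieces telescope cleanly to $\langle a,b,c\mid a^4=b^4=c^4=(bc)^m=(ca)^n=1,\ (ab)^l=a^2=b^2=c^2\rangle$. A secondary point to confirm is that the ``outer'' free factor $\pi(\overline K)$ in the van Kampen picture --- equivalently, the second triple of bands in Figure~\ref{fig:bottle} --- contributes nothing beyond the $\tau_*$-twist of what the ``inner'' data already imposes.
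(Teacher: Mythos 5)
Your structural setup is exactly the paper's: van Kampen applied to the splitting of Definition~\ref{def:klein_bottles}, with $\pi_1(B^4\setminus\nu(\Aa))\cong\pi(K)$ via the product structure, and the second piece contributing relations that identify each meridian with its image under the strong inversion. The paper phrases this by saying the bands of $\Aa'$ impose $\alpha=a^{-1}$, $\beta=b^{-1}$, $\gamma=c^{-1}$, where $\alpha,\beta,\gamma$ are the Wirtinger generators at the other ends of the three strands; your $\pi(K)/\langle\!\langle \tau_*(g)g^{-1}\rangle\!\rangle$ is the same mechanism. However, there is a genuine gap: everything you defer as ``bookkeeping'' is in fact the entire content of the proof. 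You never write down the six Wirtinger relations of $P(l,m,n)$, never pin down $\tau_*$ on $a,b,c$, and never carry out the Tietze reduction; you only assert that it ``should collapse'' to the stated presentation. The distinctive features of the answer --- that the $l$--region yields $(ab)^l=a^2$ rather than $(ab)^l=1$, that one derives $a^2=b^{-2}=c^2=a^{-2}$ and hence $a^4=1$ with $a^2$ central, and that the result is independent of the signs of $l,m,n$ --- are exactly what must be checked and cannot be guessed from the general framework. The paper's proof consists of these computations.

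A second, related problem is the sign in your displayed relation. As written, $\tau_*(a)a^{-1}$ imposes $\tau_*(a)=a$, whereas the correct relations (the paper's $\alpha=a^{-1}$, etc.) identify each meridian with the \emph{inverse} of its $\tau$--image, reflecting the orientation reversal of the normal bundle along the core of the second annulus. This inversion is not a cosmetic detail: it is what forces $a^2=b^{-2}$ rather than $a^2=b^2$ directly, and hence $a^4=1$ with meridians of order dividing $4$ rather than $2$. You flag the issue as something ``to confirm,'' but with the relation as displayed the computation would produce a different group, so the proof as proposed does not go through without resolving it.
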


\begin{proof}
	Assume for now that $l$, $m$, and $n$ are all positive; we will see later that the result is independent of the signs of these parameters.
	Using the standard technique to derive a Wirtinger presentation for $\pi(K)$, we find $\pi(K)$ is generated by the six meridional elements $a$, $b$, $c$, $\alpha$, $\beta$, and $\gamma$ that are indicated in Figure~\ref{fig:knot}, subject to the following six relations 
	$$\begin{cases}
		\alpha = (ab^{-1})^{\frac{l}{2}}a(ba^{-1})^{\frac{l}{2}} \\ 
		\alpha = (c^{-1}a^{-1})^{\frac{n-1}{2}}c(ac)^{\frac{n-1}{2}} \\
		\beta = (ab^{-1})^{\frac{l-2}{2}}aba^{-1}(ba^{-1})^{\frac{l-2}{2}} \\
		\beta = (bc)^{\frac{m-1}{2}}bcb^{-1}(c^{-1}b^{-1})^{\frac{m-1}{2}} \\
		\gamma = (bc)^{\frac{m-1}{2}}b(c^{-1}b^{-1})^{\frac{m-1}{2}} \\
		\gamma = (c^{-1}a^{-1})^{\frac{n-1}{2}}c^{-1}ac(ac)^{\frac{n-1}{2}}
	\end{cases}$$
	
	The inclusion map $K\hookrightarrow\Aa$ induces an isomorphism, and the bands from $\Aa'$ contribute three more relations: $\alpha = a^{-1}$, $\beta = b^{-1}$, and $\gamma = c^{-1}$.
	These relations are equivalent to the relations claimed by the proposition as follows.
	
	Use the relations $\alpha = a^{-1}$, $\beta = b^{-1}$, and $\gamma = c^{-1}$ to eliminate the generators $\alpha$, $\beta$, and $\gamma$.
	Then, considering those of the above six relations involving only powers of $a$ and $b$, we get
	$$(b^{-1}a)^{\frac{l-2}{2}}b^{-1}a^{-1}ba = (ab^{-1})^{\frac{l-2}{2}} = (b^{-1}a)^{\frac{l-2}{2}}b^{-1}a^{-1}b^{-1}a^{-1},$$
	which implies $b^2 = a^{-2}$.
	This relation replaces the second of the six relations.
	
	Similarly, those of the six relations above involving only $b$ and $c$ give rise to $b^2 = c^{-2}$, which replaces fourth relation, while those of the six relations above involving only $c$ and $a$ give rise to $c^2 = a^{-2}$, which replaces the sixth relation.
	At this point, we see that $a^2 = b^{-2} = c^2 = a^{-2}$, so we deduce that $a^4=1$ and $a^2 = b^2 = c^2$.
	
	Using this, we can transform the first, third, and fifth of the six relations to $a^2 = (a^{-1}b)^l$, $(bc)^m = 1$, and $(ca)^n = 1$, respectively.
	Since $a^2$ is central, $a^4=1$, and $l$ is even, we can introduce $a^{2l}$ on the right side of the new first relation and distribute the $l$ copies of $a^2$ to arrive at $a^2 = (ab)^l$.
	
	It's not hard to see that negating the parameter $l$ has the effect of interchanging the roles of $a$ and $b$ (and of $\alpha$ and $\beta$) in those of the original six relations involving only these four letters.
	Independently negating $m$ or $n$ has the same effect on the the relations involving those parameters, as well.
	It follows that the final relations depend only on $|l|$, $|m|$, and $|n|$, as claimed at the start of the proof.
\end{proof}

We remark that an alternative calculation of $\pi(\Kk)$ can be given using the handle-decomposition of Figure~\ref{fig:bottle}.
It is not \emph{a priori} obvious whether $a$, $b$, and $c$ have order 2 or 4.
We suspect they always have order 4, but we have only been able to prove the following.

\begin{corollary}
\label{cor:meridians}
	If one of $m$ or $n$ is divisible by 3 and the other is divisible by 3 or 5, then the generators $a$, $b$, and $c$ of $\pi(\Kk(l,m,n))$ have order 4.
\end{corollary}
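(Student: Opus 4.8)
The plan is to deduce that the meridians $a,b,c$ have order $4$ in $\pi(\Kk)$ from the existence of a single homomorphism $\phi\colon\pi(\Kk(l,m,n))\to\mathrm{SL}_2(\C)$ carrying $a$ to a matrix of order $4$. This does suffice: since $a^4=1$ in $\pi(\Kk)$ the order of $a$ divides $4$, so if $\phi(a)$ has order $4$ then $a$ has order exactly $4$; and since the relation $b^2=a^2=c^2$ forces $\phi(b^2)=\phi(c^2)=\phi(a)^2$, which has order $2$ and in particular is nontrivial, the elements $b$ and $c$ have nontrivial square and fourth power $1$, hence also order $4$.

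By Proposition~\ref{prop:group_K}, giving such a $\phi$ is the same as giving matrices $A,B,C\in\mathrm{SL}_2(\C)$ with $A^4=B^4=C^4=1$, $A^2=B^2=C^2$, $(AB)^l=A^2$, $(BC)^m=1$, $(CA)^n=1$, and $A$ of order $4$. I would take $A$, $B$, $C$ all of order $4$, i.e.\ traceless of determinant $1$; then $A^2=B^2=C^2=-I$ and $A^4=B^4=C^4=I$ hold automatically and $A^2\neq I$. It is then enough to arrange that $AB$ has order $2l$ and that each of $BC$ and $CA$ has order $3$ or $5$, the orders chosen so as to divide $m$ and $n$ respectively (the hypothesis on $m,n$ guarantees this is possible, since it forces each of $m,n$ to be divisible by $3$ or by $5$). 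Indeed, for such matrices $(AB)^l$ is an element of order $2$, hence equals $-I=A^2$, and $(BC)^m=I$, $(CA)^n=I$ because the orders of $BC$ and $CA$ divide $m$ and $n$.

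Constructing such $A,B,C$ is a short trace computation. Normalize $A=\left(\begin{smallmatrix}0&-1\\1&0\end{smallmatrix}\right)$ and set $B=\left(\begin{smallmatrix}i&\tau_l\\0&-i\end{smallmatrix}\right)$, where $\tau_l=\zeta+\zeta^{-1}$ with $\zeta$ a primitive $2l$-th root of unity; then $B$ is traceless of determinant $1$, and $\operatorname{tr}(AB)=\tau_l$, so $AB$ has eigenvalues $\zeta^{\pm1}$ and hence order exactly $2l$ (using $l\geq2$, so $\tau_l\neq\pm2$ and $AB$ is diagonalizable). Now seek $C=\left(\begin{smallmatrix}u&v\\w&-u\end{smallmatrix}\right)$ with $u^2+vw=-1$ (so $C$ is traceless of determinant $1$), with $v-w=\gamma$ (so $\operatorname{tr}(CA)=\gamma$), and with $2iu+\tau_l w=\beta$ (so $\operatorname{tr}(BC)=\beta$), where $\beta$ and $\gamma$ each equal $-1$ or a root of $x^2+x-1$, chosen so that the resulting orders ($3$ or $5$) divide $m$ and $n$. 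Using the last two equations to eliminate $u$ and $v$ from $u^2+vw=-1$ collapses the system to the single quadratic $(\tau_l^2-4)\,w^2-(2\tau_l\beta+4\gamma)\,w+(\beta^2-4)=0$, whose leading coefficient $\tau_l^2-4=(\zeta-\zeta^{-1})^2$ is nonzero because $l\geq2$. Over $\C$ this has a root, and any root yields matrices $B,C$ with all the prescribed orders, so $\phi$ exists and $\phi(a)=A$ has order $4$.

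The only place needing genuine care is the final verification: one must confirm that the displayed quadratic is non-degenerate for every even $l\geq2$ and that the matrices it produces genuinely realize the claimed orders---$B$ and $C$ of order $4$ (not $1$ or $2$), $AB$ of order exactly $2l$ (not a proper divisor), and $BC$, $CA$ of order $3$ or $5$ (not $1$). Each of these reduces to checking that the relevant trace avoids the value $\pm2$, which holds because $l\geq2$ and $m,n\geq3$. I expect nothing harder than this bookkeeping; the divisibility hypothesis on $m$ and $n$ enters only to make the orders $3$ and $5$ available as divisors, and the construction is otherwise uniform in $l$.
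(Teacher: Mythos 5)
Your proof is correct, but it takes a genuinely different route from the paper's. The paper exploits the divisibility hypothesis to produce a surjection $\pi(\Kk(l,m,n))\twoheadrightarrow\pi(\Kk(2,3,\delta))$ with $\delta\in\{3,5\}$, identifies this finite quotient (via GAP) as an extension of $\Z_2$ by $\text{SL}_2(\Z_\delta)$, and reads off that the image of $a$ there has order $4$. You instead construct an explicit linear representation $\pi(\Kk(l,m,n))\to\text{SL}_2(\C)$ sending the meridians to traceless matrices of determinant $1$, so that their common square is $-I\neq I$; I have checked your trace computations ($\mathrm{tr}(AB)=\tau_l$, $\mathrm{tr}(CA)=v-w$, $\mathrm{tr}(BC)=2iu+\tau_l w$) and the resulting quadratic in $w$, whose leading coefficient $(\zeta-\zeta^{-1})^2$ is indeed nonzero for $|l|\geq 2$, and since the only things to verify are the seven relations of Proposition~\ref{prop:group_K}, each of which holds by construction, the homomorphism exists and the conclusion follows. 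Your approach trades a finite computer check for a short hand computation, and it is strictly more general: nothing in the construction forces the traces assigned to $BC$ and $CA$ to take the order-$3$ or order-$5$ values. Choosing those traces to be $2\cos(2\pi/|m|)$ and $2\cos(2\pi/|n|)$ makes $BC$ and $CA$ diagonalizable of orders exactly $|m|$ and $|n|$, and the same quadratic still has a complex root; this yields order-$4$ meridians for every $\Kk(l,m,n)$ with $l$ even, $|l|\geq 2$ and $m,n$ odd, $|m|,|n|\geq 3$ --- that is, the first assertion of Conjecture~\ref{conj:order-4}. The divisibility hypothesis in the corollary is an artifact of the finite-quotient method (and, in your write-up, of your unnecessary restriction of the traces to the order-$3$ and order-$5$ values), not of the underlying representation-theoretic argument, so you should state and prove the stronger result.
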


\begin{proof}
	Suppose without loss of generality that $3\mid m$ and $\delta\mid n$, where $\delta\in\{3,5\}$, and recall that $l$ is even.
	In this case, there is a surjection $\pi(\Kk(l,m,n))\twoheadrightarrow \pi(\Kk(2, 3, \delta))$.
	This latter group is finite and fits into the short exact sequence
	$$1\longrightarrow \text{SL}_2(\Z_\delta)\longrightarrow\pi(\Kk(2,3,\delta))\longrightarrow \Z_2\longrightarrow 1,$$
	which splits if $\delta=5$.
	These facts can be verified, for example, using GAP~\cite{GAP}.
	For either choice of $\delta$, one finds the image of $a$ has order 4, in $\pi(\Kk(2,3,\delta))$.
	It follows that $a$ has order 4 in $\pi(\Kk(l,m,n))$.
	Since $b$ and $c$ are conjugate to $a$, they have order 4, as well.
\end{proof}

Later, we will be interested in the quotient of $\pi(\Kk)$ obtained by killing the squares of the generators, which turns out to be a Coxeter group.
This is the group of the surface-knot obtained by taking the connected sum of $\Kk$ with an unknotted, nonorientable surface-knot.
Note that this quotient map is the identity if and only if the meridians of $\pi(\Kk)$ have order 2.

\begin{corollary}
\label{cor:coxeter}
	Let $\Uu$ be any unknotted, nonorientable surface-knot in $S^4$.
	Then,
	$$\pi(\Kk(l,m,n)\#\Uu) = \langle a, b, c\mid a^2 = b^2 = c^2 = (bc)^m = (ca)^n = (ab)^l = 1\rangle.$$
\end{corollary}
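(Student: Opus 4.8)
The plan is to reduce the corollary to van Kampen's theorem together with the standard structure theory of unknotted nonorientable surface-knots. Recall first that the group of a connected sum is a pushout along meridians: if $\Ss_1$ and $\Ss_2$ are surface-knots in $S^4$ with chosen meridians $\mu_1$ and $\mu_2$, then
\[
	\pi(\Ss_1\#\Ss_2)\;=\;\big(\pi(\Ss_1)*\pi(\Ss_2)\big)\big/\langle\langle\mu_1\mu_2^{-1}\rangle\rangle.
\]
This follows by applying van Kampen's theorem to the splitting of the complement of $\Ss_1\#\Ss_2$ into (copies of) the complements of $\Ss_1$ and $\Ss_2$, each with a trivial half-disk removed, glued along the solid-torus complement of an unknot in $S^3$, whose core represents the common meridian.

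Next I would use the fact that every unknotted nonorientable surface-knot $\Uu\subset S^4$ has $\pi(\Uu)\cong\Z/2$, generated by a meridian $\nu$ (in particular $\nu^2=1$). This is a consequence of the classification of trivial surface-knots: such a $\Uu$ is a connected sum of standardly embedded projective planes (each of normal Euler number $\pm2$ and with group $\Z/2$ generated by its meridian) together with unknotted tori (each with group $\Z$ generated by a meridian). Feeding these pieces into the pushout formula above, one at a time: connect-summing an unknotted torus amalgamates a $\Z$ onto the meridian and leaves the group unchanged, and $(\Z/2*\Z/2)/\langle\langle\nu_1\nu_2^{-1}\rangle\rangle\cong\Z/2$.

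With these inputs the computation is immediate. Taking $\mu_1=a$ to be a meridian of $\Kk=\Kk(l,m,n)$ and $\nu$ a meridian of $\Uu$, the pushout formula gives
\[
	\pi(\Kk\#\Uu)\;=\;\big(\pi(\Kk)*\langle\nu\mid\nu^2\rangle\big)\big/\langle\langle a\nu^{-1}\rangle\rangle\;=\;\pi(\Kk)\big/\langle\langle a^2\rangle\rangle.
\]
Substituting the presentation of $\pi(\Kk)$ from Proposition~\ref{prop:group_K} and imposing $a^2=1$: the relation $a^2=b^2=c^2=(ab)^l$ now forces $b^2=c^2=(ab)^l=1$, which makes $a^4=b^4=c^4=1$ redundant, while $(bc)^m=(ca)^n=1$ are untouched. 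What remains is exactly the claimed Coxeter presentation $\langle a,b,c\mid a^2=b^2=c^2=(bc)^m=(ca)^n=(ab)^l=1\rangle$.

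The only genuinely nontrivial ingredient is the structural fact about unknotted nonorientable surface-knots — equivalently, that $\Uu$ always splits off a standardly embedded $\RP^2$ summand — for which I would cite the classification of trivial surface-knots; the rest is formal. A small point to be careful about is that the connected sum, and hence its group, is independent of the gluing data; this is harmless here since meridians of a connected surface-knot are mutually conjugate and the pushout formula only sees the normal closure of $\mu_1\mu_2^{-1}$.
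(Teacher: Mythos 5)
Your proposal is correct and follows the same route as the paper: the paper's proof simply asserts that connect-summing with $\Uu$ introduces the relation $a^2=1$ (which then forces $b^2=c^2=(ab)^l=1$ via the presentation in Proposition~\ref{prop:group_K}), and your van Kampen/pushout argument together with the fact that $\pi(\Uu)\cong\Z/2$ is exactly the justification the paper leaves implicit. The added detail is welcome but not a different method.
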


\begin{proof}
	The effect of taking the connected sum of $\Kk$ with $\Uu$ is to introduce to $\pi(\Kk)$ the relation $a^2=1$, which induces the relations $b^2=1$ and $c^2=1$, as well.
	This results in the claimed presentation.
\end{proof}

As an immediate corollary, we can appeal to the classification of finite Coxeter groups~\cite{Cox_35_The-Complete-Enumeration-of-Finite} to see that our family of Klein bottles contains infinitely many members that are pairwise distinct up to isotopy; see Question~\ref{ques:signs} regarding the converse to the following corollary.

\begin{corollary}
\label{cor:distinct}
	If $\{|l|,|m|,|n|\}\not=\{|l'|,|m'|,|n'|\}$, then $\Kk(l,m,n)$ is not isotopic to $\Kk(l',m',n')$.
\end{corollary}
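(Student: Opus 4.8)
The plan is to distinguish the Klein bottles $\Kk(l,m,n)$ using the invariant furnished by Corollary~\ref{cor:coxeter}: for each unknotted nonorientable surface-knot $\Uu$, the group $\pi(\Kk(l,m,n)\#\Uu)$ is the Coxeter group $W(l,m,n) := \langle a,b,c \mid a^2=b^2=c^2=(bc)^m=(ca)^n=(ab)^l=1\rangle$, which is the Coxeter group of the triangle group type with labels $\{l,m,n\}$ on the edges of a triangle. Since isotopic surface-knots have isomorphic peripheral data — and in particular $\Kk(l,m,n)\#\Uu$ and $\Kk(l',m',n')\#\Uu$ would be isotopic, hence have isomorphic groups — it suffices to show that $W(l,m,n)\cong W(l',m',n')$ forces $\{|l|,|m|,|n|\}=\{|l'|,|m'|,|n'|\}$ (recall the relations depend only on the absolute values, by Proposition~\ref{prop:group_K}).

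The key step is therefore to recover the multiset $\{|l|,|m|,|n|\}$ from the isomorphism type of the abstract group $W(l,m,n)$. The cleanest route is to invoke the rigidity of Coxeter systems: a rank-three Coxeter group on a complete graph (i.e.\ a triangle group with all three off-diagonal labels finite and $\geq 3$, except one label $l$ that is even and $\geq 2$) has, up to diagram automorphism, a unique Coxeter generating set. This is a consequence of the general theory — for rank-three Coxeter groups the relevant statement follows from work on reflection-preserving automorphisms, but in our setting it is elementary: the conjugacy classes of involutions and the orders of products of pairs of involutions in distinct classes pin down the Coxeter diagram. Concretely, I would identify the three "reflection" conjugacy classes, show $a$, $b$, $c$ lie in them, and observe that the set $\{|l|,|m|,|n|\}$ is exactly the set of (finite) orders of products of one element from each of two distinct such classes — an invariant of the abstract group. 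For the finitely many small cases where $W(l,m,n)$ is itself finite, one can alternatively just appeal directly to Coxeter's classification~\cite{Cox_35_The-Complete-Enumeration-of-Finite}, which is what the preceding remark in the text already flagged; the infinite cases are handled by the rigidity argument, or by passing to a further abelian/nilpotent quotient that still remembers parities and enough of the labels.

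The main obstacle is making the "recover the labels from the group" step airtight without citing heavy machinery: one must rule out an exotic isomorphism $W(l,m,n)\cong W(l',m',n')$ that does not respect any Coxeter structure. I expect to handle this by a combination of (i) the standard fact that all maximal finite subgroups, or all involutions, are controlled by the Coxeter structure in rank three, together with (ii) a direct computation of a numerical invariant — for instance the orders of torsion elements, or the structure of $W/W'$ together with the Bieri--Eckmann / growth-type invariants — that already separates the finitely many ambiguous triples. In writing this up I would lead with the reduction to the Coxeter group, state precisely the rigidity input I am using, and then close with the short combinatorial extraction of $\{|l|,|m|,|n|\}$, flagging that the only genuinely delicate point is the uniqueness of the Coxeter generating set in rank three, for which a reference (or the short self-contained involution-class argument) is given.
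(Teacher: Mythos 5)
Your reduction is exactly the paper's: the paper states Corollary~\ref{cor:distinct} with no proof body, treating it as immediate from Corollary~\ref{cor:coxeter} together with the fact that the triangle Coxeter group determines its edge labels, which is precisely the invariant you use. Your fleshing-out of the "recover $\{|l|,|m|,|n|\}$ from the abstract group" step is sound --- under the constraints ($l$ even, $m,n$ odd, $\geq 3$) the group is finite only for $(2,3,3)$ and $(2,3,5)$ (handled by Coxeter's classification) and hyperbolic otherwise, where the labels are read off from the maximal finite (dihedral) subgroups --- so the proposal is correct and follows the same route.
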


To conclude this section, we calculate the fundamental group of the branched double cover $\Sigma_2(\Kk)$.
This group, which turns out to be a group known as a \emph{von Dyck group}, will be used in obstructing decomposability below.

\begin{corollary}
\label{cor:dbc_group}
	The group $\pi_1(\Sigma_2(\Kk))$ has the following presentation:
	$$\pi_1(\Sigma_2(\Kk(l,m,n))) = \langle u, v \mid u^l = v^m = (uv)^n = 1\rangle.$$
\end{corollary}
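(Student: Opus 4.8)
The plan is to read off $\pi_1(\Sigma_2(\Kk))$ directly from the presentation of $\pi(\Kk)$ in Proposition~\ref{prop:group_K} by performing, at the level of fundamental groups, the two operations that assemble a double branched cover: pass to the index-two subgroup $H=\ker\phi$, where $\phi\colon\pi(\Kk)\twoheadrightarrow\Z_2$ sends each meridian to the generator, and then kill the square of a meridian. The underlying general fact --- that $\pi_1(\Sigma_2(\Ss))\cong H/\langle\langle\mu^2\rangle\rangle_H$ for a surface-knot $\Ss$ with meridian $\mu$ --- holds because $S^4\setminus\Kk$ has a connected double cover classified by $\phi$, and re-gluing the (branched) normal disk bundle of $\Kk$ caps off a loop double-covering $\mu$, which therefore represents $\mu^2$. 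I would first note that $\phi$ is well defined (each relator in Proposition~\ref{prop:group_K} has even exponent sum), and that $a$, being a meridian of the pretzel knot $K$, is a meridian of $\Kk$ under the inclusion $B^3\hookrightarrow(S^4,\Kk)$, so that $\mu=a$ is an admissible choice.

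The key simplification comes from the centrality of $a^2$. By Proposition~\ref{prop:group_K}, the element $a^2=b^2=c^2$ is central in $\pi(\Kk)$, so its normal closure is the cyclic subgroup $\langle a^2\rangle$, which lies in $H$; hence $\pi_1(\Sigma_2(\Kk))\cong H/\langle a^2\rangle$. Since $\langle a^2\rangle$ is normal in all of $\pi(\Kk)$, this quotient is precisely $\ker\bar\phi$, where $\bar\phi\colon Q\to\Z_2$ is the map induced on $Q:=\pi(\Kk)/\langle a^2\rangle$. Imposing $a^2=1$ in the presentation of Proposition~\ref{prop:group_K} makes $a^4=b^4=c^4=1$ redundant and collapses the chain $(ab)^l=a^2=b^2=c^2$ to $(ab)^l=b^2=c^2=1$, so
\[
Q=\langle a,b,c\mid a^2=b^2=c^2=(ab)^l=(bc)^m=(ca)^n=1\rangle,
\]
the $(l,m,n)$--triangle Coxeter group of Corollary~\ref{cor:coxeter} (equivalently, $Q=\pi(\Kk\#\,\Uu)$). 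It remains only to identify its index-two rotation subgroup $\ker\bar\phi$.

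Here I would invoke the classical description of the rotation subgroup of a triangle Coxeter group: with $u=ab$, $v=bc$, $w=ca$, one has $u,v,w\in\ker\bar\phi$, the relation $uvw=a\,b^2c^2\,a=a^2=1$ in $Q$, and a Reidemeister--Schreier computation using the transversal $\{1,a\}$ shows that $\ker\bar\phi$ is generated by $u$ and $v$ subject to exactly the relations $u^l=v^m=w^n=1$. Eliminating $w=(uv)^{-1}$ gives
\[
\pi_1(\Sigma_2(\Kk))\cong\ker\bar\phi=\langle u,v\mid u^l=v^m=(uv)^n=1\rangle,
\]
the von Dyck group $D(l,m,n)$, as claimed. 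That the orientation-preserving subgroup of the $(l,m,n)$--triangle Coxeter group is $D(l,m,n)$ is standard, so in the write-up I would cite it rather than carry out the rewriting in full.

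The point that I expect to need the most care is the first one: verifying that the classical recipe for the fundamental group of a double branched cover --- kill precisely the squares of the meridians of the index-two cover --- applies in the non-orientable surface-knot setting. In particular, although the normal bundle of $\Kk$ is non-orientable as a bundle ($w_1(\nu)=w_1(\Kk)$), its branched double cover is still a disk bundle over $\Kk$ whose boundary circle bounds a disk capping off exactly $\mu^2$; granting this, the rest is bookkeeping resting on Proposition~\ref{prop:group_K} and Corollary~\ref{cor:coxeter}.
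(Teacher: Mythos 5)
Your proof is correct, but it is organized differently from the paper's. The paper works directly with the unbranched double cover of the exterior: it applies Reidemeister--Schreier to the presentation of Proposition~\ref{prop:group_K}, obtaining a six-generator presentation on the lifts $a_1,a_2,b_1,b_2,c_1,c_2$, and only afterwards fills in the branch locus by imposing $a_1a_2=b_1b_2=c_1c_2=1$, finishing with the substitution $u=a_1b_1^{-1}$, $v=b_1c_1^{-1}$. You instead exploit the centrality of $a^2$ to perform the two operations in the opposite order: first kill $\langle a^2\rangle$ (which is already normal in all of $\pi(\Kk)$, so no normal closure inside the index-two subgroup needs to be computed) to land on the $(l,m,n)$ triangle Coxeter group of Corollary~\ref{cor:coxeter}, and then identify the branched-cover group with its index-two rotation subgroup, i.e.\ the von Dyck group $D(l,m,n)$. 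This buys you the ability to quote the classical triangle-group/von~Dyck-group relationship instead of carrying out the rewriting by hand, and it makes transparent \emph{why} the answer is a von Dyck group; the cost is that you must justify interchanging ``pass to the index-two subgroup'' with ``kill $\mu^2$,'' which is exactly where the centrality of $a^2$ does the work, and you should say explicitly that the normal closure of $\mu^2$ in $H$ coincides with $\langle a^2\rangle$ for the same reason. Your flagged worry about the non-orientable normal bundle is not a genuine obstacle: the double cover of a $D^2$-bundle branched along its zero section is again a $D^2$-bundle, its boundary fiber circle double-covers the meridian, and the van Kampen gluing therefore kills precisely conjugates of $\mu^2$, exactly as in the orientable case. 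Both arguments arrive at the same presentation, and your relation checks ($uvw=ab^2c^2a=a^2=1$, $(ac)^n$ conjugate to $(ca)^n$) are consistent with the paper's.
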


\begin{proof}
	Applying the Reidemeister-Schreier algorithm (as outlined by Fox~\cite{Fox_62_A-quick-trip}) to the presentation of $\pi(\Kk)$ coming from Proposition~\ref{prop:group_K}, we get the following presentation for the fundamental group of the double cover of the exterior $S^4\setminus\nu(\Kk)$:
	$$\langle a_1, a_2, b_1, b_2, c_1, c_2 \mid a_1a_2 = b_1b_2 = c_1c_2 = (a_1b_2)^l, (b_1c_2)^m = (c_1a_2)^n = (a_1a_2)^2 = 1 \rangle,$$
	where the $a_i$, $b_i$, and $c_i$ are associated to the (segment) lifts of $a$, $b$, and $c$, respectively.
	To obtain $\Sigma_2(\Kk)$ from this cover, we fill each meridian with a disk.
	This induces the extra relations $a_1a_2 = b_1b_2 = c_1c_2 = 1$, yielding
	$$\langle a_1, b_1, c_1, \mid (a_1b_1^{-1})^l = (b_1c_1^{-1})^m = (c_1a_1^{-1})^n = 1 \rangle,$$
	which becomes the desired presentation after the change of variable $u = a_1b_1^{-1}$ and $v = b_1c_1^{-1}$.
\end{proof}

\section{Decomposability and stable irreducibility}
\label{sec:decomp}

We can obstruct decomposability of a subfamily of our Klein bottles using the same argument as Lidman and Piccirillo~\cite[Remark]{LidPic_25_Stably-irreducible-non-orientable}.

\begin{proposition}
\label{prop:indecomp}
	If $m=n$, then $\Kk(l,m,m)$ is indecomposable.
\end{proposition}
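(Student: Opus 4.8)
The plan is to carry out the argument of Lidman and Piccirillo, for which the two needed ingredients are now in place: the amphichirality of $\Kk(l,m,m)$ (Proposition~\ref{prop:amphi}) and the identification of $\pi_1(\Sigma_2(\Kk))$ with a von Dyck group (Corollary~\ref{cor:dbc_group}). Suppose toward a contradiction that $\Kk=\Kk(l,m,m)=\Ss_1\#\Ss_2$ with neither $\Ss_i$ a $2$--sphere. Connected sum of surface-knots satisfies $\chi(\Ss_1\#\Ss_2)=\chi(\Ss_1)+\chi(\Ss_2)-2$, and a connected surface-knot has $\chi\le 2$ with equality exactly for $2$--spheres; since $\chi(\Kk)=0$ this forces $\chi(\Ss_1)=\chi(\Ss_2)=1$, so each $\Ss_i$ is a projective plane $\Pp_i\subset S^4$. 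It therefore suffices to show $\Kk(l,m,m)\ne\Pp_1\#\Pp_2$.

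Next I would pass to double branched covers. Since the double branched cover of a connected sum is the connected sum of the double branched covers, $\Sigma_2(\Kk)=X_1\#X_2$ with $X_i:=\Sigma_2(\Pp_i)$ a closed oriented $4$--manifold satisfying $\chi(X_i)=2\chi(S^4)-\chi(\RP^2)=3$. By the Whitney--Massey theorem the normal Euler number of $\Pp_i$ equals $\pm 2$, so by the $G$--signature theorem $\sigma(X_i)=-\tfrac12 e(\Pp_i)=\mp 1$; as the intersection form of a closed oriented $4$--manifold is unimodular and every even unimodular form has signature divisible by $8$, this odd signature shows $X_i$ is not spin. Hence $\Sigma_2(\Kk)=X_1\#X_2$ has odd intersection form, that is, $\Sigma_2(\Kk(l,m,m))$ is not spin. (Corollary~\ref{cor:dbc_group} gives a sharper picture: $\pi_1(\Sigma_2(\Kk))=\pi_1(X_1)*\pi_1(X_2)$ is the von Dyck group $\langle u,v\mid u^{|l|}=v^{|m|}=(uv)^{|m|}=1\rangle$, which in our range is either finite --- namely $A_4$, when $\{|l|,|m|\}=\{2,3\}$ --- or an infinite cocompact Fuchsian group, and in either case is freely indecomposable; so one of the $X_i$ is already simply connected, with intersection form $\langle\pm 1\rangle$.)

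The contradiction is to come from the complementary statement that $\Sigma_2(\Kk(l,m,m))$ \emph{is} spin. Amphichirality (Proposition~\ref{prop:amphi}) gives $e(\Kk(l,m,m))=0$ at once, and hence $\sigma(\Sigma_2(\Kk))=0$; to promote this to spin-ness, the plan is to compute the branch-locus class $w_2(\Sigma_2(\Kk))=\mathrm{PD}[\widetilde\Kk]\bmod 2$ directly from the handle decomposition of Figure~\ref{fig:bottle} --- equivalently, from the splitting $\Sigma_2(\Kk)=\Sigma_2(B^4,\Aa)\cup\overline{\Sigma_2(B^4,\Aa')}$ along $\Sigma_2(K)\#\overline{\Sigma_2(K)}$, into which the ambient isotopies from the proof of Proposition~\ref{prop:amphi} can be lifted --- and to verify that this class vanishes exactly because $m=n$. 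Granting it, $\Sigma_2(\Kk(l,m,m))$ has even intersection form, contradicting the previous paragraph, and therefore $\Kk(l,m,m)$ is indecomposable.

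I expect the spin-ness of $\Sigma_2(\Kk(l,m,m))$ to be the main obstacle. The subtlety is that vanishing of the normal Euler number by itself does \emph{not} force an even intersection form --- for instance, the unknotted Klein bottle with zero normal Euler number has double branched cover $\CP^2\#\overline{\CP^2}$, which is not spin --- so one must genuinely identify the mod-$2$ class carried by the branch locus, and it is precisely at this step that the hypothesis $m=n$, rather than merely $e(\Kk)=0$, is forced to enter.
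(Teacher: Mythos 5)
Your reduction to ruling out $\Kk(l,m,m)=\Pp_1\#\Pp_2$ is correct, and so is the computation that $\Sigma_2(\Pp_1)\#\Sigma_2(\Pp_2)$ has odd intersection form (each summand has unimodular rank-one form of signature $\pm 1$, hence is not spin). But the contradiction you propose rests entirely on the claim that $\Sigma_2(\Kk(l,m,m))$ \emph{is} spin, and that claim is never established: you describe a ``plan'' to compute the mod-2 class of the branch locus and then ``grant'' the outcome. This is the heart of the argument, not a technical loose end, and your own cautionary example (the unknotted Klein bottle with $e=0$, whose double branched cover is $\CP^2\#\overline{\CP^2}$) shows that nothing you have actually proved --- amphichirality, $e(\Kk)=0$, $\sigma(\Sigma_2(\Kk))=0$, the von Dyck fundamental group --- forces evenness. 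Everything computed so far ($b_2=2$, $\sigma=0$) is equally consistent with the form $\langle 1\rangle\oplus\langle -1\rangle$, in which case your strategy would collapse outright even though the proposition could still be true by other means. So as written the proof has a genuine gap at its central step.

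The paper's proof takes a different route: it invokes the indecomposability argument of Lidman and Piccirillo as a black box and verifies its three hypotheses for $\Kk(l,m,m)$, namely (i) $\sigma(\Sigma_2(\Kk))=0$, coming from $e(\Kk)=0$ via Proposition~\ref{prop:amphi}; (ii) $\pi_1(\Sigma_2(\Kk))$ is not a nontrivial free product --- for the von Dyck groups this is either finiteness (the $(2,3,3)$ and $(2,3,5)$ cases, giving $A_4$ and $A_5$) or the absence of cut vertices in the Cayley graph of a group generated by finite-order hyperbolic rotations; and (iii) $\Sigma_2(\Kk)$ admits an orientation-reversing diffeomorphism, again from Proposition~\ref{prop:amphi}. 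Note that (ii) and (iii) are essential inputs there, whereas in your write-up free indecomposability appears only in a parenthetical aside and the orientation-reversing diffeomorphism is not used at all in the endgame; conversely, spin-ness of $\Sigma_2(\Kk)$ plays no role in the paper's argument. To repair your proof you would either have to genuinely show that the branch locus represents the trivial class in $H_2(\Sigma_2(\Kk);\Z_2)$ --- for which you offer no evidence, and which may well be false --- or abandon the spin route and supply the actual Lidman--Piccirillo endgame into which your (correctly verified) ingredients feed.
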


\begin{proof}
	Let $\Kk = \Kk(l,m,n)$.
	In~\cite[Remark]{LidPic_25_Stably-irreducible-non-orientable}, Lidman and Piccirillo prove that their Klein bottle is indecomposable.
	Their argument applies to our Klein bottles, with the following three caveats.
	
	First, we need to know that $\Sigma_2(\Kk)$ has signature zero.
	This is implied by the fact that the normal Euler number of $\Kk$ is zero, which follows from Proposition~\ref{prop:amphi} in the case that $m=n$.
	(Even without the restriction that $m=n$, it is possible to verify that the normal Euler number of $\Kk$ is zero using the technique of~\cite[Section~3]{Kam_89_Nonorientable-surfaces-in-4-space}, which involves tracing longitudes for the unlink in Figure~\ref{fig:bottle} through the band attachments.)
	
	Second, we need to know that $\pi_1(\Sigma_2(\Kk))$ doesn't split as the free product of two nontrivial groups.
	If (up to signs) $(l,m,n)\not\in\{(2,3,3),(2,3,5)\}$, then this follows by the argument given by Lidman and Piccirillo: In this case, $\pi_1(\Sigma_2(\Kk))$ is generated by hyperbolic rotations of finite order, so the Cayley graph contains no cut vertex, in contrast to the Cayley graph of a free product.
	If (up to signs) $(l,m,n)$ is $(2,3,3)$ or $(2,3,5)$, then $\pi_1(\Sigma_2(\Kk))$ is $A_4$ or $A_5$, respectively.
	Since these groups are finite, they cannot be nontrivial free products.
	
	Third, we need to know that $\Sigma_2(\Kk)$ admits an orientation-reversing diffeomorphism.
	This is implied by the fact that $\Kk$ is amphicheiral when $m=n$, as indicated by Proposition~\ref{prop:amphi}.
\end{proof}

Finally, we prove the second result from the introduction, offering some restriction on the stable isotopy class of our Klein bottles.

\begin{repproposition}{prop:stably}
	Let $\Kk$ be any $\Kk(l,m,n)$.
	Then, either
	\begin{enumerate}
		\item $\Kk$ is stably irreducible, or
		\item $\Kk$ is stably isotopic to a projective plane with order-4 meridians.
	\end{enumerate}
\end{repproposition}

\begin{proof}
	If $\Kk$ is stably irreducible, we are done.
	Suppose instead that $\Kk\#\Uu = \Pp\#\Uu$, where $\Pp$ is a projective plane and $\Uu$ and $\Uu'$ are unknotted surfaces, which we can assume to be non-orientable by adding on additional projective plane summands.
	
	In Corollary~\ref{cor:coxeter}, we found that $G = \pi(\Kk\#\Uu)$ is a Coxeter group.
	Howlett calculated the second homology (or Schur multipliers) of such groups~\cite[Theorem~A]{How_88_On-the-Schur-multipliers-of-Coxeter}.
	In our case, the Coxeter group is described by a triangular graph with edge weights $l$, $m$, and $n$, with $l$ even and $m$ and $n$ odd; therefore, Howlett's calculation tells us that $H_2(G)$ is an elementary 2--group of rank one, hence $H_2(G)\cong\Z_2$.
	
	Suppose for a contradiction that $\Pp$ has order-2 meridians.
	It follows that $G$ is a free product with amalgamation:
	$$G\cong \pi(\Pp)\ast_{\Z_2}\pi(\Uu').$$
	We can apply the Mayer-Vietoris sequence for group homology~\cite[Corollary~7.7]{Bro_94_Cohomology-of-groups}, which states:
	If $G = G_1\ast_AG_2$, then there is a long exact sequence in group homology:
	$$\cdots \longrightarrow H_2(A) \longrightarrow H_2(G_1)\oplus H_2(G_2) \longrightarrow H_2(G) \longrightarrow H_1(A) \longrightarrow H_1(G_1)\oplus H_1(G_2) \longrightarrow H_1(G) \longrightarrow \cdots$$
	Let $G_1 = \pi(\Pp)$ and $G_2 = \pi(\Uu')$.
	
	First, note that $H_2(G_1)\cong 0$, since $H_2(S^4\setminus\nu(\Pp))\cong 0$ and $H_2(G_1)$ is a quotient of this group.
	Next, note that $H_2(G_2)\cong 0$, since $G_2\cong\Z_2$.
	Finally, note that each of $G$, $G_1$, and $G_2$ abelianize to $\Z_2$.
	It follows that the Mayer-Vietoris sequence collapses to:
	$$\cdots\longrightarrow 0 \longrightarrow H_2(G) \longrightarrow \Z_2 \stackrel{f}{\longrightarrow} \Z_2\oplus\Z_2 \longrightarrow \Z_2 \longrightarrow \cdots$$
	The map $f$ is injective, as it maps the generator of $\Z_2$ to the class of the meridian in each of the $G_i$.
	Therefore, we have $H_2(G)\cong 0$.
	This contradicts the fact $H_2(G)\cong\Z_2$, as computed above.
\end{proof}

\section{Future work}
\label{sec:future}

As mentioned in the introduction, we conjecture the following.

\begin{conjecture}
\label{conj:order-4}
	Each $\Kk(l,m,n)$ has order-4 meridians, is indecomposable, and is stably irreducible. 
\end{conjecture}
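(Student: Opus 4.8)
The plan is to treat all three assertions through the single structural observation that $\pi(\Kk)$ is a central extension
\[ 1 \longrightarrow \langle z\rangle \longrightarrow \pi(\Kk) \longrightarrow W(l,m,n) \longrightarrow 1, \]
where $z = a^2 = b^2 = c^2$ is central of order dividing $2$, where $W(l,m,n)$ is the full $(l,m,n)$ triangle Coxeter group of Corollary~\ref{cor:coxeter}, and where $\pi_1(\Sigma_2(\Kk))$ is its orientation-preserving von~Dyck subgroup $\langle u,v\mid u^l=v^m=(uv)^n=1\rangle$ of Corollary~\ref{cor:dbc_group}. The meridians have order $4$ precisely when $z\neq 1$, that is, when this extension has nontrivial kernel.

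For the order-$4$ assertion I would show that $z\neq 1$ for \emph{every} admissible $(l,m,n)$ by producing a uniform representation of $\pi(\Kk)$ in which the meridians manifestly have order $4$. Realize $W(l,m,n)$ as the reflection group of a triangle with angles $\pi/l,\pi/m,\pi/n$ in $\Hbb^2$ (or in $S^2$, $\R^2$ in the finitely many spherical and Euclidean cases), so that $a,b,c$ are the reflections in the three sides and the relations $(ab)^l=(bc)^m=(ca)^n=1$ record the vertex rotations. Pulling back the twisted $\mathrm{Pin}$ double cover $\mathrm{Pin}(2,1)\to\mathrm{O}(2,1)$ in which reflections square to $-1$, each meridian lifts to an order-$4$ element $\tilde a,\tilde b,\tilde c$ with $\tilde a^2=\tilde b^2=\tilde c^2=-1$, while each product of two adjacent reflections lifts to a rotor with $(\tilde a\tilde b)^l=-1$, and similarly at the other two vertices. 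The key is a parity bookkeeping that exploits the standing hypotheses that $l$ is even and $m,n$ are odd: rescaling the lifts by independent signs $\epsilon_a,\epsilon_b,\epsilon_c\in\{\pm1\}$ multiplies $(\tilde a\tilde b)^l$ by $(\epsilon_a\epsilon_b)^l=1$ but multiplies $(\tilde b\tilde c)^m$ and $(\tilde c\tilde a)^n$ by $\epsilon_b\epsilon_c$ and $\epsilon_c\epsilon_a$. Choosing the signs so that the two odd relations become $+1$ while the even relation remains $-1$ yields a genuine homomorphism $\pi(\Kk)\to\mathrm{Pin}(2,1)$ satisfying all defining relations and sending $z\mapsto -1$, whence $a^2\neq 1$ and the meridians have order exactly $4$. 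The main obstacle here is the Clifford-algebra verification that the rotor powers and the reflection squares carry exactly the advertised signs, but this is a finite, parity-driven computation, so I expect this part to be the most tractable of the three.

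For indecomposability without the hypothesis $m=n$, I would revisit the three ingredients of Proposition~\ref{prop:indecomp}. The signature-zero fact and the non-splitting of $\pi_1(\Sigma_2(\Kk))$ as a nontrivial free product both hold for all parameters, so the only use of $m=n$ is to supply an orientation-reversing diffeomorphism of $\Sigma_2(\Kk)$ via amphichirality (Proposition~\ref{prop:amphi}). The plan is to remove this dependence: a connected-sum splitting of a Klein bottle into two non-spheres must be $\Pp_1\#\Pp_2$ for projective planes (forced by the Euler-characteristic count $\chi=0$), so it would induce an amalgamated splitting of $\pi(\Kk)$ over a peripheral quaternionic subgroup and a corresponding equivariant connected-sum splitting of $\Sigma_2(\Kk)$. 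I would try to obstruct this directly from the one-endedness of the von~Dyck group together with signature zero, rather than from an ambient orientation-reversing symmetry. The hard part is establishing that the Lidman--Piccirillo obstruction can be run symmetrically using only the signature hypothesis, which appears to require a refinement of their $G$-signature argument that does not presuppose that $\Sigma_2(\Kk)$ reverses orientation.

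For stable irreducibility, Proposition~\ref{prop:stably} reduces the problem to excluding its alternative~(2): that $\Kk$ is stably isotopic to a projective plane $\Pp$ with order-$4$ meridians. The plan is to rule this out by comparing double branched covers after stabilization. If $\Kk\#\Uu\cong\Pp\#\Uu'$, then $\Sigma_2(\Kk\#\Uu)\cong\Sigma_2(\Pp\#\Uu')$, and I would compute $\pi_1$ of both sides, using that the left-hand group is controlled by the triangle group $W(l,m,n)$ of Corollary~\ref{cor:coxeter}, while the right-hand group is severely constrained by the quaternionic peripheral structure of a projective plane. The aim is to show that these fundamental groups can never agree when the triangle group is infinite (hyperbolic). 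The deepest obstacle is that hypothetical order-$4$ projective planes are exactly the objects whose existence would contradict the Kinoshita Conjecture, so we have essentially no structural control over the putative $\Pp$; unconditionally excluding it for all $(l,m,n)$ is the principal difficulty and may require resolving the order-$4$ projective plane question in the relevant cases.
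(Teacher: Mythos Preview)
The statement you are addressing is a \emph{conjecture}, listed as such in the paper's ``Future work'' section; the paper offers no proof of it and explicitly leaves all three assertions open beyond the partial results of Corollary~\ref{cor:meridians}, Proposition~\ref{prop:indecomp}, and Proposition~\ref{prop:stably}. There is therefore no argument in the paper to compare your proposal against.

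That said, your document is not a proof either, and you are candid about this. Your Pin$^-$ approach to the order-4 claim is genuinely promising: the parity bookkeeping you describe---using that $l$ is even while $m,n$ are odd to arrange $(\tilde a\tilde b)^l=-1$ and $(\tilde b\tilde c)^m=(\tilde c\tilde a)^n=+1$ simultaneously---is exactly the kind of argument that could upgrade Corollary~\ref{cor:meridians} to all parameters, and a careful Clifford-algebra computation may well close it out. For indecomposability, however, you correctly identify that the only use of $m=n$ in Proposition~\ref{prop:indecomp} is to supply an orientation-reversing diffeomorphism of $\Sigma_2(\Kk)$, and you propose to remove this by reworking the Lidman--Piccirillo $G$-signature argument; but you give no indication of how to do this, and there is no known substitute. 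For stable irreducibility, you reduce to excluding a projective plane with order-4 meridians and then observe that this is tantamount to a case of the Kinoshita Conjecture---which is precisely why the paper stops at the dichotomy in Proposition~\ref{prop:stably}. So parts~(2) and~(3) of your plan do not advance beyond the obstacles the paper already records, and the conjecture remains open.
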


Our Klein bottles shared enough properties with the ribbon Klein bottles of Lidman and Piccirillo for us to apply their obstruction to decomposability, but we do not expect that our Klein bottles are ribbon.

\begin{conjecture}
\label{conj:ribbon}
	The $\Kk(l,m,n)$ are not ribbon.
\end{conjecture}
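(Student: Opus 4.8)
The plan is to obstruct ribbon-ness homologically, using the classical fact that the exterior of a ribbon surface-knot is homotopy equivalent to a $2$--complex. If $\Kk$ were ribbon, then $S^4\setminus\nu(\Kk)$ would be homotopy equivalent to a $2$--complex $Y$ with $\pi_1(Y)\cong\pi(\Kk)$; since $\chi(S^4\setminus\nu(\Kk))=2$ and $H_1(\pi(\Kk))\cong\Z_2$, computing the $\F_2$--Euler characteristic of $Y$ forces $\dim_{\F_2}H_2(Y;\F_2)=2$, and as $H_2(Y;\F_2)$ surjects onto $H_2(\pi(\Kk);\F_2)$ we would get $\dim_{\F_2}H_2(\pi(\Kk);\F_2)\le 2$. (Equivalently, $\pi(\Kk)$ would then admit a presentation of deficiency $-1$, while in general $\mathrm{def}(G)\le\dim_{\F_2}H_1(G;\F_2)-\dim_{\F_2}H_2(G;\F_2)$.) So it would suffice to prove $\dim_{\F_2}H_2(\pi(\Kk);\F_2)\ge 3$. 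The analogue over an odd prime $p$ only yields the bound $\dim_{\F_p}H_2(\pi(\Kk);\F_p)\le 1$ for a ribbon $\Kk$, and that route is useless here: for odd $p$ one has $H_*(\pi(\Kk);\F_p)\cong H_*(\Delta;\F_p)$, where $\Delta$ is the Coxeter triangle group of Corollary~\ref{cor:coxeter}, and Howlett's theorem~\cite{How_88_On-the-Schur-multipliers-of-Coxeter} gives $H_2(\Delta;\Z)\cong\Z_2$, so $H_2(\Delta;\F_p)=0$.

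The crux is therefore the computation of $H_2(\pi(\Kk);\F_2)$, which I would attack via the Lyndon--Hochschild--Serre spectral sequence of the central extension $1\to\Z_2\to\pi(\Kk)\to\Delta\to 1$ whose kernel is generated by the central involution $a^2$; this extension is non-split whenever the meridians have order $4$ --- in particular under the hypotheses of Corollary~\ref{cor:meridians}, and conjecturally always. Since $H_*(\Z_2;\F_2)$ is one-dimensional in every degree and acts trivially, the total-degree-$2$ part of $E_2$ is $H_2(\Delta;\F_2)\oplus H_1(\Delta;\F_2)\oplus H_0(\Delta;\F_2)$, which by Howlett's theorem and the universal coefficient theorem is four-dimensional. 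One must then analyze the differentials into and out of these groups --- governed by the mod--$2$ reduction of the extension class in $H^2(\Delta;\F_2)$ and by the low-degree $\F_2$--(co)homology of $\Delta$ --- and show that at least three of the four classes survive to $E_\infty$.

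This last step is where I expect the genuine difficulty: the extension is non-trivial, so the relevant $d_2$ (and possibly a $d_3$) need not vanish, and it is entirely possible that $\dim_{\F_2}H_2(\pi(\Kk);\F_2)$ collapses to $2$ or even $1$, in which case the strategy gives nothing and a finer invariant is required. It bears emphasizing that no invariant of $\pi_1(\Sigma_2(\Kk))$ alone can detect non-ribbon-ness here: by the remark following Theorem~\ref{thm:main}, $\Kk(2,3,7)$ has the same double-branched-cover group as the ribbon Klein bottles of Lidman and Piccirillo~\cite{LidPic_25_Stably-irreducible-non-orientable}. If the homological computation fails, the natural fallbacks would be gauge-theoretic invariants of the closed $4$--manifold $\Sigma_2(\Kk)$ --- which has non-elementary fundamental group for hyperbolic $(l,m,n)$ --- or Khovanov--Rasmussen-type genus bounds adapted to surface-knots; neither, at present, is in a state to settle the question.
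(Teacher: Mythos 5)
The statement you are addressing is Conjecture~\ref{conj:ribbon}, which the paper leaves open: there is no proof in the paper to compare against, and your submission is, by your own account, a strategy sketch rather than a proof (the decisive step --- showing that at least three classes survive in the spectral sequence --- is explicitly deferred). So at best this could be judged as a proposed line of attack. Unfortunately, the line of attack cannot work, for a reason independent of how the Lyndon--Hochschild--Serre computation turns out.

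The flaw is that your target inequality $\dim_{\F_2}H_2(\pi(\Kk);\F_2)\geq 3$ is unattainable. Hopf's theorem (the surjection $H_2(X;M)\twoheadrightarrow H_2(\pi_1(X);M)$, valid for any connected complex $X$ and trivial coefficients $M$) applies to the exterior $E=S^4\setminus\nu(\Kk)$ itself, not only to a putative $2$--complex spine. By Alexander duality, $H_1(E;\Z)\cong\Z_2$ and $H_2(E;\Z)\cong H^1(\Kk;\Z)\cong\Z$, so $\dim_{\F_2}H_2(E;\F_2)=2$, and hence $\dim_{\F_2}H_2(\pi(\Kk);\F_2)\leq 2$ unconditionally. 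The bound you derive from the ribbon hypothesis is therefore already true for every $\Kk(l,m,n)$ and carries no information; the same collapse kills the deficiency variant, since Epstein's inequality is capped by the number of generators of $H_2(\pi(\Kk);\Z)$, which is a quotient of the cyclic group $H_2(E;\Z)\cong\Z$. More broadly, since $H_i(E;\Z)=0$ for $i\geq 3$, the exterior is already homologically indistinguishable from a $2$--complex, so no obstruction of this homological type can detect non-ribbonness here. (Your observations that the odd-primary and $\pi_1(\Sigma_2(\Kk))$ routes fail are correct, and your count of the total-degree-two part of the $E_2$ page is right; the problem is that the whole framework is bounded above by $H_2(E;\F_2)$ before any spectral sequence is run.) Any genuine progress on Conjecture~\ref{conj:ribbon} will need an invariant sensitive to more than the homology of the group --- e.g.\ the $\Z[\pi]$--module structure of $\pi_2(E)$, chain-level or Casson--Gordon/gauge-theoretic data on $\Sigma_2(\Kk)$, or a direct geometric argument.
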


The Klein bottle $\Yy$ with order-4 meridians constructed by Yoshikawa is built by attaching a disoriented tube to the 2--twist spin of the pretzel knot $P(-2,3,3)$~\cite[Section~4]{Yos_98_The-order-of-a-meridian-of-a-knotted}.
Since $\pi(\Yy)$ agrees with $\pi(\Kk(-2,3,3))$, it seems likely that $\Yy$ is isotopic to $\Kk(-2,3,3)$ (up to changing the signs of the parameters; see Question~\ref{ques:signs}), which motivates the following question.

\begin{question}
\label{ques:Yoshi}
	Which of the $\Kk(l,m,n)$ arise as the result of attaching a disoriented tube to a (twist-spun) 2--knot? 
\end{question}

The fact that $\pi(\Kk(l,m,n))$ doesn't depend on the sign of the parameters (see the proof of Proposition~\ref{prop:group_K}) motivates our final question.

\begin{question}
\label{ques:signs}
	Does the isotopy class of $\Kk(l,m,n)$ depend on the signs of the parameters?
\end{question}


\bibliographystyle{amsalpha}
\bibliography{Klein_bottles.bib}

\end{document}